\documentclass[12pt]{amsart}

\usepackage[a4paper,margin=30mm]{geometry}
\usepackage[T1]{fontenc}
\usepackage[utf8]{inputenc}
\usepackage{lmodern}
\usepackage{amsmath,amssymb,amsthm,mathtools,mathrsfs}
\usepackage{microtype}
\usepackage{enumitem}
\usepackage{hyperref}



\hypersetup{
  colorlinks=true,
  linkcolor=blue,
  citecolor=blue,
  urlcolor=blue
}

\newtheorem{theorem}{Theorem}
\newtheorem{proposition}[theorem]{Proposition}
\newtheorem{lemma}[theorem]{Lemma}

\theoremstyle{definition}

\newtheorem{example}[theorem]{Example}

\theoremstyle{remark}
\newtheorem{remark}[theorem]{Remark}

\newcommand{\Tr}{\operatorname{Tr}}

\title{An infinite completely normal basis theorem}

\author{Patrik Lundstr\"om}

\address{Department of Engineering Science,
University West, SE-46186 Trollh\"attan, Sweden}
\email{patrik.lundstrom@hv.se}

\begin{document}

\begin{abstract}
Let $L/K$ be an arbitrary Galois extension with 
group $G$, equipped with the Krull topology. 
Let $C(G,K)$ denote the $K$-vector space of continuous
functions $G\to K$, with $K$ discrete and $G$ acting
by left translation.
We prove that there exists a
$G$-equivariant $K$-linear isomorphism
$\varphi:C(G,K)\to L$ such that, for every open subgroup
$H$ of $G$, the map $\varphi^H:C(H,L^H)\to L$ induced by
extension by zero and extension of scalars is an
$H$-equivariant $L^H$-linear isomorphism. 
This gives a completely normal analogue of Lenstra's infinite
normal basis theorem; the case of finite base fields was
previously established in an equivalent form by Hachenberger.
\end{abstract}

\subjclass[2020]{Primary 12F10; Secondary 13B05}

\keywords{Completely normal bases, infinite Galois extensions,
Galois ring extensions, trace-coherent families,
trace lifting, inverse limits}

\maketitle

\section{Introduction}\label{sec:introduction}

Let $L/K$ be a Galois field extension with 
Galois group $G$. We consider $G$ as a topological
group equipped with the Krull 
topology; see \cite[Ch.~VI, Exercise~43]{Lang}.

When $L/K$ is \emph{finite}, the following
classical result holds:

\begin{theorem}[Normal basis theorem]\label{thm:NBT}
Let $L/K$ be a finite Galois field extension with Galois group $G$. 
Then there exists an element $x \in L$ 
such that the family $(\sigma(x))_{\sigma\in G}$
forms a basis for $L$ as a vector space over $K$. In that case, 
$(\sigma(x))_{\sigma\in G}$ 
is called a 
normal basis for $L/K$, and
$x$ is called a 
normal basis generator for $L/K$.
\end{theorem}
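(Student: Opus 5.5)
The plan is to reduce the statement to finding a single element $x\in L$ whose conjugates are linearly independent, by a determinant criterion, and then to split into the cases $K$ infinite and $K$ finite. Write $n=|G|=[L:K]$ and list $G=\{\sigma_1,\dots,\sigma_n\}$. The first step is the criterion: for $x\in L$, the family $(\sigma_i(x))_i$ is a $K$-basis of $L$ if and only if the matrix $M(x)=(\sigma_i^{-1}\sigma_j(x))_{i,j}$ is invertible. To see this, suppose $\sum_j c_j\sigma_j(x)=0$ with $c_j\in K$. Applying each $\sigma_i^{-1}$ gives $M(x)c=0$, so invertibility of $M(x)$ forces $c=0$. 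Conversely, if $M(x)d=0$ for some nonzero $d\in L^n$, then the $\sigma_j(x)$ are linearly dependent over $K$. This uses Dedekind independence of characters together with the standard descent argument: the $L$-span of the vectors $(\sigma(y))_{\sigma\in G}$ is all of $L^n$, and a minimal-support relation can be normalized to have coefficients in $K$. Since $\dim_K L=n$, linear independence of the $n$ conjugates already gives a basis.

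\emph{Case $K$ infinite.} Choose a $K$-basis $e_1,\dots,e_n$ of $L$ and consider the polynomial
\[
P(t_1,\dots,t_n)=\det\Bigl(\sum_k t_k\,\sigma_i^{-1}\sigma_j(e_k)\Bigr)_{i,j}\in L[t_1,\dots,t_n],
\]
so that $M(\sum_k a_k e_k)$ is obtained from $P$ by substituting $a_k$ for $t_k$. The key step is to show that $P\neq 0$ as a polynomial. By Artin's independence of characters, the matrix $(\sigma_j(e_k))_{j,k}$ is invertible. So after the invertible linear change of variables $u_j=\sum_k t_k\sigma_j(e_k)$, the polynomial $P$ becomes $\det(u_{\sigma_i^{-1}\sigma_j})_{i,j}$, with one independent variable $u_\sigma$ for each $\sigma\in G$. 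Specializing $u_{\mathrm{id}}=1$ and all other $u_\sigma=0$ turns this matrix into the identity, because $\sigma_i^{-1}\sigma_j=\mathrm{id}$ exactly when $i=j$. Hence the determinant is nonzero, so $P\neq0$. Since $K$ is infinite, a nonzero polynomial with coefficients in $L$ cannot vanish at every point of $K^n$. This last point is standard: expand the coefficients of $P$ in the basis $(e_k)$ to obtain finitely many polynomials over $K$, at least one of them nonzero. Any non-vanishing point $a\in K^n$ then gives $x=\sum a_k e_k$ with $M(x)$ invertible.

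\emph{Case $K$ finite.} Here $G=\langle\phi\rangle$ is cyclic of order $n$, generated by the Frobenius. Make $L$ a module over $K[t]$ by letting $t$ act as $\phi$. Since $\phi$ is $K$-linear and $\phi^n=\mathrm{id}$, the minimal polynomial of $\phi$ divides $t^n-1$. By Dedekind independence of the powers $1,\phi,\dots,\phi^{n-1}$, no nonzero polynomial of degree less than $n$ annihilates $\phi$, so the minimal polynomial is exactly $t^n-1$. By the structure theory of finitely generated modules over the PID $K[t]$, a linear endomorphism whose minimal polynomial has degree equal to $\dim_K L$ is cyclic. So there is $x$ with $L=K[t]x$, which means $x,\phi(x),\dots,\phi^{n-1}(x)$ span $L$; since there are exactly $n$ of them, they form a basis.

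I expect the main obstacle to be the nonvanishing of $P$ in the infinite case, specifically making the linear change of variables to $(u_\sigma)$ rigorous over $L$ rather than over $K$. I would handle this by working in $L[t_1,\dots,t_n]$ throughout, where the substitution is an $L$-algebra automorphism, and only at the end pass back to $K$-points using the coefficient expansion described above.
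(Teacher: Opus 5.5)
The paper gives no proof of this classical theorem and simply cites Lang's \emph{Algebra} (Ch.~VI, \S13, Thm.~13.1). Your argument is correct and is essentially that standard proof: the criterion that $\det\bigl(\sigma_i^{-1}\sigma_j(x)\bigr)\neq 0$ forces linear independence of the conjugates, the nonvanishing of this determinant at some $K$-point when $K$ is infinite (your change of variables $u_\sigma=\sum_k t_k\sigma(e_k)$ reproves the algebraic independence of the automorphisms), and the cyclic $K[t]$-module argument for the Frobenius when $K$ is finite. Only the direction ``$M(x)$ invertible $\Rightarrow$ basis'' of your criterion is needed, so the vaguer descent sketch for the converse can be dropped.
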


For a proof of the normal basis theorem,
see e.g. \cite[Ch.~VI, \S13, Thm.~13.1]{Lang}. 

Several variations of the normal basis theorem have been 
considered. Bayer-Fluckiger, Lenstra and Serre
\cite{BayerFluckiger,BayerFluckigerLenstra,BayerSerre}
studied self-dual normal bases. For finite
fields, Lenstra and Schoof \cite{LenstraSchoof} proved the primitive
normal basis theorem. Waterhouse \cite{Waterhouse} gave a unified
version of the primitive element theorem and the normal basis theorem.
Further generalizations were obtained
in \cite{BjorkholdtLundstrom}, using
polynomial expressions in the conjugates of an element, and by Biglari
\cite{Biglari}, using Zariski
methods.

Of particular interest in this article is the following strengthening of 
Theorem~\ref{thm:NBT}:

\begin{theorem}[Completely normal basis theorem]\label{thm:CNBT}
Let $L/K$ be a finite Galois field extension with Galois group $G$. 
Then there exists an element
$x\in L$ which is a normal basis generator for $L/M$ for every intermediate 
field $K\subseteq M\subseteq L$.
In that case, the family 
$(\sigma(x))_{\sigma\in G}$ is called a completely normal basis for $L/K$, and
$x$ is called a 
completely normal basis generator
for $L/K$. 
\end{theorem}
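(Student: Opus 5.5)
The plan is to split according to whether $K$ is infinite or finite. The infinite case follows from a Zariski-density argument built on Theorem~\ref{thm:NBT}. The finite case is a genuinely arithmetic statement about cyclic extensions, and I expect it to be the main obstacle.

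\emph{Infinite $K$.} Fix a $K$-basis $e_1,\dots,e_n$ of $L$ and write $x=\sum_i a_i e_i$ with $a\in K^n$. By the Galois correspondence every intermediate field has the form $M=L^H$ for a subgroup $H\le G$, and there are finitely many such $H$. For each $H$ consider
\[
D_H(X_1,\dots,X_n)=\det\Bigl(\sum_{i=1}^n X_i\,\sigma\tau^{-1}(e_i)\Bigr)_{\sigma,\tau\in H}\in L[X_1,\dots,X_n].
\]
By the standard criterion (Dedekind independence of characters), $x$ is a normal basis generator for $L/L^H$ if and only if $D_H(a)\neq 0$. Applying Theorem~\ref{thm:NBT} to the Galois extension $L/L^H$ gives some $a\in K^n$ with $D_H(a)\ne 0$, so $D_H$ is a nonzero polynomial. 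Hence $D=\prod_H D_H$ is a nonzero polynomial in $L[X]$.

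It remains to find a $K$-rational point where $D$ does not vanish. Expand $D=\sum_j b_j P_j$ with $(b_j)$ a $K$-basis of $L$ and $P_j\in K[X]$. Some $P_j$ is nonzero, and since $K$ is infinite there is $a\in K^n$ with $P_j(a)\neq 0$. Because the $P_j(a)$ lie in $K$ and the $b_j$ are $K$-independent, $D(a)\neq0$. This $x$ is a completely normal basis generator.

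\emph{Finite $K$.} Here $G=\langle\varphi\rangle$ is cyclic of order $n$, and $L\cong K[t]/(t^n-1)$ as a $K[t]$-module with $t$ acting as Frobenius. For $H=\langle\varphi^d\rangle$, $x$ is normal over $L^H$ if and only if no nonzero polynomial in $\varphi^d$ with coefficients in $L^H$ of degree $<n/d$ kills $x$. I would proceed in three steps.
\begin{enumerate}
\item Reduce to prime-power degree. Write $n=\prod_r r^{e_r}$ and let $L_r$ be the subextension of degree $r^{e_r}$ over $K$. Show that the product of completely normal generators of the linearly disjoint $L_r$ is completely normal for $L$. This uses $L\cong\bigotimes_K L_r$ compatibly with $G\cong\prod_r G_r$, together with the fact that each intermediate field of $L/K$ is a compositum of intermediate fields of the $L_r/K$.
\item Case $r=p=\operatorname{char}K$. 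Here $t^{p^e}-1=(t-1)^{p^e}$, so every normal generator of $L/K$ is automatically normal over every intermediate field. I expect this to be a direct local-ring argument.
\item Case $r\neq p$. This requires analysing the factorization of $t^{r^e}-1$ over each $L^H$ (cyclotomic factors), and choosing $x$ whose components in the primary decomposition are simultaneously generators over all intermediate fields.
\end{enumerate}
Step 3 is where I expect the real difficulty. The bad subcases are those where the $K$-irreducible factors of $t^{r^e}-1$ split further over intermediate fields, for example $r=2$ with $q\equiv 3\pmod 4$. My first attempt would be an induction on $e$ via a trace-lifting argument: a generator for the degree-$r^{e-1}$ subextension lifts along $\Tr_{L/L^{\langle\varphi^{r^{e-1}}\rangle}}$. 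If that fails, I would cite the known result of Blessenohl--Johnsen for this case.
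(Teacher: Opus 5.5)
\textbf{Infinite $K$.} Your argument is correct. It is the same Zariski-density idea the paper uses in Theorem~\ref{thm:completenormalbasis}, which for $S=L$, $R=K$ is exactly this case: for each $H\leq G$, the normal generators of $L/L^H$ form a nonempty Zariski-open subset of the $K$-affine space $L$, and finitely many such sets meet because $K$ is infinite. The only difference is how openness is detected. You use the $L$-valued Dedekind determinant and then descend to $K$ by expanding $D$ in a $K$-basis of $L$. The paper uses the resolvent criterion $\rho_H(x)\in L[H]^\times$ together with the $K$-valued determinant of left multiplication on $L[H]$ (Lemma~\ref{lem:units-open}). The paper's version has the advantage of working verbatim for Galois ring extensions.

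\textbf{Finite $K$.} The paper gives no proof of its own here and simply cites Blessenohl--Johnsen, so your fallback matches the paper's treatment. Your own Steps~1--3, however, do not amount to a proof.

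Steps~1 and~2 are sound. In Step~2, since $t^{p^k}-1=(t-1)^{p^k}$, normality over an intermediate field $M$ is equivalent to $\Tr_{L/M}(x)\neq0$, and $\Tr_{L/K}=\Tr_{M/K}\circ\Tr_{L/M}$.

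Step~3 is the entire theorem, and the trace-lifting induction does not reduce it. Let $F_j$ be the intermediate field of degree $r^j$ over $K$, and put $L'=F_{e-1}$.
\begin{enumerate}
\item Because $r\neq p$, the factorization $t^{r^e}-1=(t^{r^{e-1}}-1)\Phi_{r^e}(t)$ is coprime. Hence $L=L'\oplus W$ with $W=\ker\Phi_{r^e}(\varphi)=\ker\Tr_{L/L'}$, and $\Tr_{L/L'}(x'+w)=rx'$.
\item Since $\Phi_{r^{e-j}}(\varphi^{r^j})=\Phi_{r^e}(\varphi)$ and $(\varphi^{r^j})^{r^{e-j-1}}=\varphi^{r^{e-1}}$, the same splitting is the coprime decomposition of $L$ as an $F_j[\varphi^{r^j}]$-module for every $j<e$.
\item Consequently, $x'+w$ is completely normal if and only if $x'$ is completely normal for $L'/K$ and $w$ generates $W$ simultaneously over each of the rings $F_j[\varphi^{r^j}]$, $0\leq j<e$.
\end{enumerate}
So a completely normal lift of a given $y$ exists if and only if such a simultaneous generator $w$ exists, regardless of $y$. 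The induction merely restates the problem.

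Proving that these finitely many generator sets in $W$ intersect is exactly where finiteness of $K$ matters, because no density argument is available. Example~\ref{ex:finite-field-no-complete-normal-basis} shows the analogous intersection can be empty for Galois algebras that are not fields. This is what the Blessenohl--Johnsen case analysis handles. As it stands, your finite-field case rests entirely on that citation.
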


Faith~\cite{Faith} proved this theorem over infinite base fields, while Blessenohl and 
Johnsen~\cite{BlessenohlJohnsen} 
established the finite-field case.

If \([L:K]=\infty\), no \(K\)-basis of \(L\) can consist of the conjugates of a single element, since every element of \(L\) has only finitely many conjugates.
Thus, if we seek a
generalization of the normal basis theorem 
to infinite Galois extensions, we must 
look for an alternative formulation.

Suppose first that $L/K$ is finite. 
Following Lenstra~\cite{Lenstra}, we let $(G,K)$ denote
the $K$-vector space of all functions $f:G\to K$. 
We let $G$ act on $(G,K)$ by
$(\sigma f)(\tau)=f(\sigma^{-1}\tau)$,
$\sigma,\tau\in G$.
The normal basis theorem can be formulated by saying that there is a
$G$-equivariant $K$-linear isomorphism
$\varphi : (G,K) \to L$.
Indeed, if $x\in L$ is a normal basis generator for $L/K$, then
$\varphi(f) = 
\sum_{\sigma\in G}f(\sigma)\sigma(x)$.
Conversely, if $\varphi$ is such an isomorphism 
and $1_e : G \to K$ is the
characteristic function of $\{e\}$, 
where $e$ is the identity element of $G$,
then $x=\varphi(1_e)$ is a normal
basis generator for $L/K$.

Lenstra~\cite{Lenstra} showed that this formulation remains valid for
arbitrary Galois extensions if $(G,K)$ is replaced by
the $K$-vector space $C(G,K)$ of continuous functions $G\to K$, where
$G$ is equipped with the Krull topology 
and $K$ with the discrete topology. 
We equip $C(G,K)$ with the same translation action,
$(\sigma f)(\tau)=f(\sigma^{-1}\tau)$.
More precisely, Lenstra proved the following
\cite[Theorem~1]{Lenstra}:

\begin{theorem}[Infinite normal basis theorem]
\label{thm:Lenstra}
Let $L/K$ be a Galois field extension with Galois group $G$. Then there
is a $G$-equivariant $K$-linear isomorphism
$C(G,K) \to L$.
\end{theorem}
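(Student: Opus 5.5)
We build the isomorphism as an inverse limit of finite normal bases that are compatible under traces. For each open normal subgroup $N$ of $G$, the fixed field $L^N$ is finite Galois over $K$ with group $G/N$. Continuous functions $G\to K$ are exactly the functions that factor through some finite quotient $G/N$, since $K$ is discrete and $G$ is compact. Hence
\[
C(G,K)=\varinjlim_N (G/N,K),
\]
where for $N'\subseteq N$ the transition map is pullback along $G/N'\to G/N$. Likewise $L=\bigcup_N L^N$. It therefore suffices to choose, for every $N$, a normal basis generator $x_N$ of $L^N/K$ such that
\[
\Tr_{L^{N'}/L^N}(x_{N'})=x_N \quad\text{whenever } N'\subseteq N.
\]
We then define
\[
\varphi(f)=\sum_{\sigma N'\in G/N'} f(\sigma)\,\sigma(x_{N'})
\]
for $f$ factoring through $G/N'$. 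Given $f$ factoring through $G/N$, grouping the cosets of $N'$ inside each coset of $N$ turns the sum into $\sum_{\sigma N} f(\sigma)\sigma(\Tr(x_{N'}))=\sum_{\sigma N}f(\sigma)\sigma(x_N)$. So $\varphi$ is well defined. It is $G$-equivariant, and it is bijective because on each finite level it is the isomorphism $(G/N,K)\to L^N$ coming from Theorem~\ref{thm:NBT}.

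The main step is producing this trace-compatible family. The key fact is a trace-lifting lemma. Let $M\supseteq F\supseteq K$ with $M/K$ and $F/K$ finite Galois, and let $y$ be a normal basis generator of $F/K$. Then some normal basis generator $x$ of $M/K$ satisfies $\Tr_{M/F}(x)=y$. To prove it, note that $\Tr_{M/F}$ corresponds to the surjection $K[G/N']\to K[G/N]$ of group algebras. Normal basis generators of $M$ are the elements $\alpha x_0$ with $\alpha\in K[G/N']^\times$, where $x_0$ is any fixed generator, and likewise for $F$. Since $y=\beta\,\Tr(x_0)$ with $\beta\in K[G/N]^\times$, we need a unit of $K[G/N']$ mapping to $\beta$. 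Surjectivity of units holds because the kernel is $(\sum_{n\in N/N'} n)$-related and the map of finite-dimensional algebras is surjective. More precisely, a surjection of Artinian rings induces a surjection on units: units lift modulo the Jacobson radical, and a surjection of semisimple algebras is a projection onto a product of factors, where units lift.

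To get a compatible family over the whole directed system of open normal subgroups, rather than just along a chain, we proceed as follows. If $G$ has a countable base, we pick a cofinal descending chain and lift successively. In general we use a compactness argument: for each $N$, the set $X_N$ of normal basis generators of $L^N/K$ is a nonempty set with trace maps between the levels, and the lemma says every trace map is surjective. We want a nonempty inverse limit of these sets. The sets $X_N$ are torsors under $K[G/N]^\times$, which can be infinite, so the Mittag-Leffler or compactness argument is delicate. Zorn's lemma does not immediately apply either, because an uncountable directed inverse system of surjections can have empty limit.

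This is the step I expect to be the main obstacle. I would try first to handle it by transfinite induction on a well-ordering of the fields $L^N$, forming compositums. Given a compatible family $x_F$ on a directed subsystem $\mathcal S$ and a new finite Galois field $E$, the element $x_{EF}$ must trace to $x_F$ for all $F\in\mathcal S$ simultaneously. This is reduced to a single lift by working with the directed union $F_\infty=\bigcup_{\mathcal S}F$: we require a lift of the generator of $EF_0/F_0$ that is compatible along $E F$ for the finitely many relevant $F$. Such a lift exists because $\mathrm{Gal}(EF/F)\cong\mathrm{Gal}(E/E\cap F)$, and $E\cap F$ stabilizes for $F$ large in $\mathcal S$, since $E$ has finitely many subfields. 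This is the technical heart, and it is where Lenstra's proof of Theorem~\ref{thm:Lenstra} does the analogous work.
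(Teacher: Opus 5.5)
Your reduction to trace-coherent families and your trace-lifting lemma are correct and coincide with the paper's (Proposition~\ref{prop:coherent-Lenstra-galois} with Remark~\ref{rem:profiniteLenstra}, and Proposition~\ref{prop:normal-trace-lift} via Proposition~\ref{prop:unit-lifting-zariski}). Your description of the kernel of $K[G/N']\to K[G/N]$ is off, since it is generated by the elements $n-1$, but the Artinian unit-lifting argument you then give is fine. The cofinal-chain argument settles the case of countable $\mathcal U$, which covers every finite or countable $K$ (for finite $K$, $G$ is procyclic).

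The remaining case, uncountable $\mathcal U$, is the real content, and there your proposal has a genuine gap. When you adjoin $E$ to a directed $\mathcal S$ carrying a coherent family, you must choose $x_{EF}$ for every $F\in\mathcal S$ at once, subject to $\Tr_{EF/F}(x_{EF})=x_F$ and to coherence among the new values. That is again an inverse-limit problem over $\mathcal S$ of the original type. Stabilization of $E\cap F$ only makes $\operatorname{Gal}(EF/F)$ constant; it gives no coherent choice. Compatibility with finitely many $F$ at a time is also not enough.

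The natural way to propagate one lift is $x_{EF}=x_{EF_0}\,x_F/x_{F_0}$, using $EF=EF_0\otimes_{F_0}F$. It satisfies every trace condition, but it need not be a normal basis generator. Over $\mathbb Q$, take $F_0=\mathbb Q(\sqrt2)$, $E=\mathbb Q(\sqrt2,\sqrt3)$, $F=\mathbb Q(\sqrt2,\sqrt5)$, $x_{F_0}=1+\sqrt2$, $x_E=\tfrac12(1+\sqrt2)+\sqrt3+2\sqrt6$ and $x_F=\tfrac12(1+\sqrt2)+\tfrac17(2\sqrt5+3\sqrt{10})$. All three are normal, and $x_E,x_F$ have trace $x_{F_0}$ to $F_0$. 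Yet $x_Ex_F/x_{F_0}$ has $\sqrt{15}$-coefficient $0$, so all its conjugates lie in a $7$-dimensional subspace. Thus each $F\in\mathcal S$ excludes some choices at $EF_0$. With uncountably many $F$ you need a reason these exclusions do not exhaust all candidates. Surjectivity of the transition maps cannot supply it, by Remark~\ref{rem:relative-limit-counterexample}.

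The missing idea is the paper's Zariski/cardinality mechanism. For infinite $K$ one has $|\mathcal U|\le|K|$. The normal generators at level $N$ form a nonempty Zariski-open subset of the affine space $L^N$ (Lemma~\ref{lem:nonemptyzariskinb}), and the traces are surjective linear maps carrying these sets onto one another. One adds indices of $\mathcal U$ one at a time, keeping only \emph{finite liftability} rather than coherence on a directed subsystem (Lemma~\ref{lem:finite-liftable-extension}). Each step then requires a single new value, lying in an intersection of fewer than $|K|$ sets with the finite intersection property. Each of these sets is relatively Zariski-open in an affine subspace, since fibres and images under affine maps keep this form (Lemma~\ref{lem:affine-open}). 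Choosing a finite intersection of minimal dimension puts them all in one affine space. There Lemma~\ref{lem:small-open-intersection}, which says fewer than $|K|$ nonempty Zariski-open sets meet, yields a point (Theorem~\ref{thm:cardinal-bounded-inverse-limit}, Proposition~\ref{prop:relative-affine-open-limit}). Finite $K$ is handled by a countable system of finite sets (Proposition~\ref{prop:inverse-limit-divisibility}), which your chain argument also covers. The paper runs this with completely normal generators, which gives the statement a fortiori.
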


Mostowski~\cite[Satz~(3.2)]{Mostowski}
had earlier obtained this result
when the degrees of all finite subextensions are invertible
in the base field, in particular in characteristic zero.
A cohomological proof of Lenstra's theorem was subsequently
given in \cite{LundstromCohomology}.
Related results include normal bases for infinite Galois
ring extensions \cite{LundstromRing} and self-dual normal
bases for infinite Galois field extensions
\cite{LundstromSelfDual}.

The completely normal basis theorem admits an analogous
reformulation. Suppose that $L/K$ is finite. If $x\in L$ 
and $H$ is a subgroup of $G$, define
$\varphi^H : (H,L^H)\to L$ by
$\varphi^H(f)=\sum_{\sigma\in H}f(\sigma)\sigma(x)$, where
$L^H$ denotes the fixed field of $H$.
Then $x$ is a completely normal basis generator for $L/K$ if and only
if $\varphi^H$ is an isomorphism for every subgroup $H$ of $G$.

This formulation extends naturally to arbitrary Galois extensions. If
$H$ is an open subgroup of $G$, let
$i_H:C(H,K) \to C(G,K)$
be extension by zero. A $G$-equivariant 
$K$-linear map
$\varphi:C(G,K)\to L$ induces, using
$C(H,L^H)\cong L^H\otimes_K C(H,K)$,
where $L^H$ has the discrete topology,
an $H$-equivariant $L^H$-linear map
$\varphi^H:C(H,L^H) \to L$ by
$\varphi^H (a\otimes f)=
a\varphi(i_H(f))$.

Our main result simultaneously generalizes
Theorems~\ref{thm:CNBT} and~\ref{thm:Lenstra}:

\begin{theorem}[Infinite completely normal basis theorem]\label{thm:main-field}
Let $L/K$ be a Galois field extension with Galois group $G$. Then there
exists a $G$-equivariant $K$-linear isomorphism
$\varphi:C(G,K) \to L$
such that, for every open subgroup $H$ 
of $G$, the induced
$H$-equivariant $L^H$-linear map
$\varphi^H:C(H,L^H) \to L$
is an isomorphism.
\end{theorem}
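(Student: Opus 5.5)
Our plan is to reduce the statement to a compatible choice of completely normal basis generators along the finite layers of $L/K$, and then pass to the inverse limit. A $G$-equivariant $K$-linear map $\varphi:C(G,K)\to L$ is determined by the values $x_N=\varphi(1_N)\in L^N$, where $N$ runs over the open normal subgroups of $G$ and $1_N$ is the characteristic function of $N$. These values must form a \emph{trace-coherent family}: whenever $N'\subseteq N$, we need $\Tr_{L^{N'}/L^N}(x_{N'})=x_N$, because $1_N=\sum_{\sigma\in N/N'}\sigma 1_{N'}$. Conversely, every trace-coherent family defines such a $\varphi$.

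The first step is to translate the conditions of Theorem~\ref{thm:main-field} into conditions on the family $(x_N)$. We expect that $\varphi$ is an isomorphism, and each $\varphi^H$ is an isomorphism, if and only if for every open normal $N$ the element $x_N$ is a completely normal basis generator of the finite Galois extension $L^N/K$. To see this, note that every open subgroup $H$ contains some open normal $N$. Moreover, $C(H,L^H)$ is the directed union of the spaces of functions constant on cosets of the open normal subgroups $N\subseteq H$. On that layer, $\varphi^H$ is exactly the finite map $(H/N,L^H)\to L^N$, $f\mapsto\sum f(\sigma)\sigma(x_N)$. By the finite reformulation given in the introduction, this map is bijective precisely when $x_N$ is a normal basis generator of $L^N/L^H$. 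Taking the union over $N$ gives bijectivity of $\varphi^H$, and the case $H=G$ gives bijectivity of $\varphi$.

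So the theorem reduces to the following statement. Let $\mathcal{C}_N\subseteq L^N$ be the set of completely normal basis generators of $L^N/K$; it is nonempty by Theorem~\ref{thm:CNBT}. We must show that the inverse system $(\mathcal{C}_N)$, with the trace maps as transition maps, has nonempty inverse limit. We will verify two things.

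\textbf{(a) Well defined.} The trace maps send $\mathcal{C}_{N'}$ into $\mathcal{C}_N$. This is the easy half. If $x$ generates a normal basis of $L^{N'}/L^{H}$ for every $H\supseteq N'$, then $\Tr_{L^{N'}/L^N}(x)$ generates a normal basis of $L^N/L^H$ for $H\supseteq N$. Indeed, the surjection $L^H[H/N']\to L^H[H/N]$ carries the free generator to a free generator, and the trace corresponds to summing over $N/N'$.

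\textbf{(b) Surjective.} The trace maps $\mathcal{C}_{N'}\to\mathcal{C}_N$ are onto. This is the \emph{trace lifting} step and is the main obstacle. Given $y\in\mathcal{C}_N$, we must find $x\in\mathcal{C}_{N'}$ with $\Tr(x)=y$. The fibre $\Tr^{-1}(y)$ is a coset $x_0+\ker\Tr$. For each subgroup $H\supseteq N'$, the condition of generating a normal basis over $L^H$ is the non-vanishing of a determinant, viewed as a polynomial function on this affine $K$-space.

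The argument then splits by the size of $K$. When $K$ is infinite, we show that each such polynomial is nonzero on the coset; a finite product of nonzero polynomials is then nonzero somewhere, which gives the lift. To prove nonvanishing for a fixed $H$, we use the module-theoretic fact that over the group algebra $L^H[H/N']$, which is free of rank one, any generator of the quotient $L^H[H/N]$ lifts to a generator. We expect to prove this via local-ring or Nakayama arguments on the finite-dimensional algebra, reducing to lifting units along a surjection of Artinian rings, where units do lift. When $K$ is finite, $G$ is procyclic and the situation is the one treated by Hachenberger. In that case we would either cite his result or redo the lifting using the $K[t]$-module description, where surjectivity of trace on generators again follows by lifting units modulo the appropriate polynomials.

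Once (a) and (b) hold, we conclude as follows. When $G$ has countably many open normal subgroups, an inverse limit of nonempty sets along surjections over a countable cofinal chain is nonempty. For the general case, we apply Zorn's lemma to partial trace-coherent families defined on the subfields $L^{N}$ attached to sub-directed systems. Equivalently, in the style of Lenstra, we extend over subfields generated by adding one finite extension at a time, using (b) to extend at each step and taking unions at limit stages. The resulting family $(x_N)$ defines the required $\varphi$.
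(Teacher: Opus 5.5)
Your reduction to trace-coherent families of completely normal generators is correct, and so is your step (a). The passage to the limit for uncountable $\mathcal U$, however, does not work as described. To adjoin one finite Galois extension $E$ to a subfield $F$ whose finite Galois subfields already carry a coherent family, you must choose $x_{ME}$ for \emph{all} finite Galois $M\subseteq F$ simultaneously. These choices must be compatible with each other and with the given $x_M$, so this is itself an inverse-limit problem over a possibly uncountable index set.

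Step (b) only lifts one element along one trace map. It does not show that the constrained fibres $\{z\in\mathcal C_{ME}:\Tr_{ME/M}(z)=x_M\}$ surject onto one another. Even with surjective transition maps, an inverse system of nonempty sets over an uncountable directed set can have empty limit. Lenstra's argument for ordinary normal bases survives this because there the relevant sets are, up to translation, units lying in cosets of ideals of Artinian group algebras. The descending chain condition then makes \emph{every} family of such sets with the finite intersection property have nonempty intersection. Completely normal generators form merely Zariski-open sets. Remark~\ref{rem:relative-limit-counterexample} shows that for such systems a coherent family on a sub-directed set of size $|K|$ need not extend, so surjectivity plus openness cannot drive a cardinality-free Zorn argument.

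The missing idea is a cardinal bound. Since $|L|=|K|$, one has $|\mathcal U|\le|K|$, so you can enumerate $\mathcal U$ in length at most $|K|$ and choose the $x_N$ one index at a time, keeping the partial family \emph{finitely liftable}. At each stage the admissible values are cut out by fewer than $|K|$ nonempty relatively Zariski-open subsets of a common affine subspace, and these intersect by Lemma~\ref{lem:small-open-intersection}. This is what Theorem~\ref{thm:cardinal-bounded-inverse-limit} and Proposition~\ref{prop:relative-affine-open-limit} provide.

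Step (b) also has a hole. Complete normality of a lift $x\in L^{N'}$ must be checked for every $H\supseteq N'$, including those with $N\not\subseteq H$. There $H/N$ is undefined and your lifting fact says nothing. Put $I=H\cap N$. Base change from $L^N/L^{HN}$ shows that $y$ is an $(H/I)$-normal generator of $L^I/L^H$, and you can lift it along the trace for $I/N'$ to an $(H/N')$-normal generator of $L^{N'}/L^H$. But the trace of that lift down to $L^N$ is $[N:I]\,y$, which vanishes when the characteristic divides $[N:I]$. The paper repairs this by lifting $cy$ instead, where $c=\Tr_{HN/N'}(s)^{-1}\Tr_{H/N'}(s)\in(L^H)^\times$ for a completely normal $s\in L^{N'}$. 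Then the conjugates of $c$ under coset representatives of $I$ in $N$ sum to $1$ (Theorem~\ref{thm:complete-normal-trace-lift}).

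Finally, for finite $K$ you leave surjectivity of the trace on completely normal generators unproved, and your polynomial argument is unavailable there. You do not need it: the sets $\mathcal C_N$ are finite and nonempty. Step (a) together with compactness of inverse limits of nonempty finite sets (Proposition~\ref{prop:inverse-limit-divisibility}) already gives a coherent family.
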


For finite $K$, Hachenberger proved the existence of
trace-coherent families of completely normal basis generators
\cite[Theorem~26.4]{Hachenberger1997}, an equivalent formulation
of Theorem~\ref{thm:main-field}; see
Proposition~\ref{prop:coherent-Lenstra-galois}.

Here is an outline of the article.
Section~\ref{sec:preliminaries} collects 
preliminary results on Zariski-open subsets, 
inverse limits, and finite-dimensional algebras.
Section~\ref{sec:finitegaloisring} treats finite 
Galois ring extensions and proves surjectivity of 
trace maps on completely
normal basis generators over infinite base fields.
Section~\ref{sec:profinitegaloisring} characterizes
trace-coherent families in terms of equivariant 
isomorphisms of continuous function modules and
establishes an extension
theorem for prescribed trace-coherent families under
cardinality hypotheses.
It concludes with a proof of 
Theorem~\ref{thm:main-field},
treating finite and infinite base fields separately.

\section{Preliminaries}\label{sec:preliminaries}

Throughout, $K$ denotes a field and $G$ a group with identity $e$.
We write $H\leq G$ and $H\trianglelefteq G$ for subgroups and
normal subgroups, respectively, $|X|$ for the cardinality of
a set $X$, and $\mathbb N=\{1,2,3,\ldots\}$.
If $G$ acts on a set $X$, then we let $X^G$ denote 
the set 
$\{x\in X \mid 
\sigma(x)=x\text{ for all }\sigma\in G\}$.

\subsection{Zariski-open sets}\label{subsec:zariski-limits}

We equip each nonempty finite-dimensional affine $K$-space
with its Zariski topology, whose closed sets are defined
in affine coordinates by polynomial equations.
For background on algebraic sets and the Zariski topology,
see \cite[Ch.~IX, \S2 and \S5]{Lang}.
The basic open sets $D(f)=\{x\in A:f(x)\neq0\}$,
where $f$ is a polynomial on $A$, form a basis for this topology.
The Zariski topology on an affine subspace is the subspace
topology inherited from the ambient affine space.
If $K$ is infinite, every nonempty Zariski-open subset
of a finite-dimensional affine $K$-space is dense.
Indeed, any two nonempty open subsets contain basic
open subsets $D(f)$ and $D(g)$ with $f,g$ nonzero.
Their intersection contains $D(fg)$, which is nonempty
because a nonzero polynomial over an infinite field
cannot vanish at every point of the affine space;
see \cite[Ch.~IV, \S1, Corollary~1.7]{Lang}.
A map between affine $K$-spaces is called 
\emph{affine-linear}
if, in affine coordinates, it has the form $x\mapsto Tx+b$
with $T$ linear. Every such map is Zariski continuous, since
the pullback of a polynomial under such a map is again
a polynomial.
We recall the following openness property:

\begin{lemma}\label{lem:affine-open}
Every surjective affine-linear map between finite-dimensional
affine $K$-spaces is open for the Zariski topology.
\end{lemma}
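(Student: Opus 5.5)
We reduce to the case of a linear projection by choosing affine coordinates adapted to the map, and then verify openness directly on basic open sets.

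\emph{Reduction.} Let $\pi:A\to B$ be surjective affine-linear, with $\dim A=n$ and $\dim B=m$. Composing with translations, which are Zariski homeomorphisms, we may assume $\pi$ is linear. Since $\pi$ is surjective, we can choose linear coordinates $(x_1,\dots,x_n)$ on $A$ and $(y_1,\dots,y_m)$ on $B$ in which
\[
\pi(x_1,\dots,x_n)=(x_1,\dots,x_m).
\]
To do this, pick a complement of $\ker\pi$ and a basis of it mapping to a basis of $B$, then extend by a basis of $\ker\pi$. Linear changes of coordinates are Zariski homeomorphisms, so it suffices to treat this coordinate projection $A=B\times K^{n-m}\to B$. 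Since images commute with unions and the sets $D(f)$ form a basis, it is enough to show that $\pi(D(f))$ is open for every polynomial $f$.

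\emph{The basic open sets.} Write
\[
f(x',x'')=\sum_\alpha c_\alpha(x')\,(x'')^\alpha,
\]
with $x'\in K^m$, $x''\in K^{n-m}$, and $c_\alpha\in K[x']$. Then $b\in\pi(D(f))$ exactly when the polynomial $f(b,\cdot)\in K[x'']$ is nonzero as a function on $K^{n-m}$.

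If $K$ is infinite, a polynomial vanishes as a function if and only if all its coefficients vanish. Hence $\pi(D(f))=\bigcup_\alpha D(c_\alpha)$, which is open.

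If $K$ is finite with $q$ elements, I would first reduce $f$ modulo the relations $x_j^q-x_j$ for the variables in $x''$. This does not change $f$ as a function on $A$, so it does not change $D(f)$. After the reduction every exponent in $x''$ is less than $q$. Such a reduced polynomial vanishes on $K^{n-m}$ only if all its coefficients vanish, so again $\pi(D(f))=\bigcup_\alpha D(c_\alpha)$. (Alternatively, when $K$ is finite every subset of the finite set $K^m$ is Zariski closed, since points are closed; the topology is then discrete and openness is trivial.)

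\emph{Main obstacle.} The only delicate point is the identity $\pi(D(f))=\bigcup_\alpha D(c_\alpha)$. It rests on the fact that a polynomial vanishes identically as a function only if its coefficients vanish. Over an infinite field this is the result cited from \cite[Ch.~IV, \S1, Corollary~1.7]{Lang}. Over a finite field it needs the reduction step above, or the observation that the topology is discrete.
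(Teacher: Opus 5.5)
Your proof is correct, but it takes a different route from the paper. The paper never computes images of basic open sets. For each $a\in O$ it picks an affine-linear section $s$ of $q$ with $s(q(a))=a$. Then $s^{-1}(O)$ is an open neighbourhood of $q(a)$ contained in $q(O)$, and the only input is that affine-linear maps are Zariski continuous. In your adapted coordinates this is just the slice $b\mapsto(b,x_0'')$ through a point $(b_0,x_0'')\in O$.

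That argument is uniform in the field. It needs neither the fact that a nonzero polynomial over an infinite field is a nonzero function nor your separate finite-field treatment, so the step you flag as the main obstacle never arises.

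What your elimination argument buys in exchange is more precise information. It identifies the image of a basic open set $D(f)$ as the explicit finite union $\bigcup_\alpha D(c_\alpha)$, where the $c_\alpha$ are the coefficients of $f$ in the fibre variables. The section argument yields only openness. Both of your finite-field alternatives are valid, and the observation that the topology is discrete is the quicker one.
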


\begin{proof}
Let $q:A\to B$ be such a map and let $O\subseteq A$ be open.
For each $a\in O$, choose an affine-linear section $s:B\to A$
of $q$ satisfying $s(q(a))=a$.
Since $s$ is continuous, $s^{-1}(O)$ is an open neighborhood
of $q(a)$ contained in $q(O)$.
Thus $q(O)$ is open.
\end{proof}

We shall also use the following result.

\begin{lemma}\label{lem:small-open-intersection}
Let $K$ be an infinite field and let $A$ be a nonempty
finite-dimensional affine $K$-space.
If $\{O_\lambda:\lambda\in\Lambda\}$ is a family of nonempty
Zariski-open subsets of $A$ with $|\Lambda|<|K|$, then
$\bigcap_{\lambda\in\Lambda}O_\lambda\neq
\varnothing$.
\end{lemma}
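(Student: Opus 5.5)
We reduce to basic open sets and then to the complements of the zero loci of nonzero polynomials. Each $O_\lambda$ is nonempty and open, so it contains a nonempty basic open set $D(f_\lambda)$ with $f_\lambda$ a nonzero polynomial in affine coordinates $x_1,\dots,x_n$ on $A\cong K^n$. It therefore suffices to find a point $a\in K^n$ with $f_\lambda(a)\neq 0$ for all $\lambda\in\Lambda$, that is, a point outside the union $\bigcup_\lambda Z(f_\lambda)$ of fewer than $|K|$ hypersurfaces. If $\Lambda$ is finite we are done at once, since $\prod_\lambda f_\lambda$ is a nonzero polynomial and hence does not vanish identically on $K^n$ by the fact recalled above. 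So the interesting case is $\Lambda$ infinite with $|\Lambda|<|K|$, where $K$ is uncountable.

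We argue by induction on $n$, choosing coordinates one at a time. For $n=1$, each nonzero $f_\lambda\in K[x_1]$ has only finitely many roots. Hence the union of the zero sets has cardinality at most $|\Lambda|\cdot\aleph_0$, which equals $|\Lambda|$ when $\Lambda$ is infinite (and is finite otherwise). In either case this is less than $|K|$, so some $a_1\in K$ avoids all of them.

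For the inductive step, we write each $f_\lambda$ as a polynomial in $x_n$ with coefficients in $K[x_1,\dots,x_{n-1}]$ and let $g_\lambda\neq 0$ be one nonzero coefficient, say the leading one. By induction applied to the family $\{D(g_\lambda)\}_{\lambda\in\Lambda}$ in $K^{n-1}$, which has the same index set, we find $(a_1,\dots,a_{n-1})$ with $g_\lambda(a_1,\dots,a_{n-1})\neq0$ for all $\lambda$. Then each one-variable polynomial $f_\lambda(a_1,\dots,a_{n-1},x_n)$ is nonzero. The case $n=1$ then supplies $a_n$ avoiding all their roots.

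The only point requiring care is the cardinal arithmetic $|\Lambda|\cdot\aleph_0<|K|$. This holds because, for infinite $\Lambda$, the product equals $|\Lambda|$, and for finite $\Lambda$ it is countable while $K$ is infinite. The empty-index case is trivial, since then the intersection is all of $A$, which is nonempty.
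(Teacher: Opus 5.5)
Your proof is correct, but it takes a different route from the paper. The paper does not argue directly. For $\dim A>0$ it takes complements and cites Ghosh's covering theorem \cite[Theorem~3.14]{Ghosh2022}, which, read through complements, says that an affine space over an infinite field is not a union of fewer than $|K|$ proper Zariski-closed subsets. The case $\dim A=0$ is immediate because $A$ is a point.

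You instead shrink each $O_\lambda$ to a basic open set $D(f_\lambda)$. You then prove the hypersurface version of that covering statement from scratch, by induction on the number of variables using leading coefficients. This way the hypothesis $|\Lambda|<|K|$ is used exactly once, in the one-variable root count $\bigl|\bigcup_\lambda Z(f_\lambda)\bigr|\leq|\Lambda|\cdot\aleph_0=|\Lambda|$ for infinite $\Lambda$. The paper's proof is a one-line appeal to a more general external result. Yours is self-contained and elementary, and it shows exactly where the cardinality bound enters.

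Two small fixes are needed. First, state the trivial case $n=0$, or start the induction there. Second, the justification in your final paragraph is wrong as written. For finite nonempty $\Lambda$ one has $|\Lambda|\cdot\aleph_0=\aleph_0$, which is not less than $|K|$ when $K$ is countable. The correct reason is the one you already gave in the $n=1$ paragraph: a finite union of finite root sets is finite. In any case, the finite case is settled by the product $\prod_\lambda f_\lambda$, so inside the induction you may assume $\Lambda$ is infinite.
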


\begin{proof}
This follows by taking complements in
\cite[Theorem~3.14]{Ghosh2022} when $\dim A>0$.
If $\dim A=0$, then $A$ is a singleton and the assertion
is immediate.
\end{proof}

\subsection{Inverse limits}

A partially ordered set $I$ is \emph{directed} if it is
nonempty and every pair of elements has a common upper bound.
An \emph{inverse system of sets} indexed by a directed set $I$
consists of sets $X_i$ and maps $p_{ji}:X_j\to X_i$ for
$i\leq j$, satisfying $p_{ii}=\operatorname{id}_{X_i}$ and
$p_{ki}=p_{ji}\circ p_{kj}$ whenever $i\leq j\leq k$.
Its \emph{inverse limit} $\varprojlim_{i\in I}X_i$ 
is the set
$\{ (x_i)_{i\in I}\in\prod_{i\in I}X_i
\mid  p_{ji}(x_j)=x_i \text{ whenever }i\leq j \}$.

The following result will be used to prove
Theorem~\ref{thm:main-field} over finite base fields.

\begin{proposition}\label{prop:inverse-limit-divisibility}
Let $(X_n,T_{m,n})$ be an inverse system of nonempty finite
sets indexed by the positive integers ordered by divisibility.
Then $\varprojlim_n X_n\neq\varnothing$.
\end{proposition}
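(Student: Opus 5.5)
The plan is to reduce to a cofinal chain and then apply the standard fact that a countable inverse sequence of nonempty finite sets has nonempty limit. First I will record the notation: for $m\mid n$ the maps are $T_{n,m}\colon X_n\to X_m$ (whichever index convention the paper uses, the transition maps go from the larger index in the divisibility order to the smaller). The chain $n_k=k!$ is cofinal in $(\mathbb N,\mid)$, since every $m$ divides $m!$. Setting $Y_k=X_{k!}$ with maps $Y_{k+1}\to Y_k$ given by $T_{(k+1)!,k!}$ yields an inverse sequence of nonempty finite sets.

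Next I will show that $\varprojlim_k Y_k\neq\varnothing$ by the usual König-type argument. For each $k$, let $Y_k^\infty=\bigcap_{j\ge k}\operatorname{im}(Y_j\to Y_k)$. These images form a decreasing chain of nonempty subsets of the finite set $Y_k$, so the chain stabilizes and $Y_k^\infty$ is nonempty. I then check that $Y_{k+1}^\infty\to Y_k^\infty$ is surjective. Given $y\in Y_k^\infty$, for each $j$ the set of elements of $Y_{k+1}$ that lie in the image of $Y_j$ and map to $y$ is nonempty. These sets decrease in $j$ and lie in a finite set, so their intersection is nonempty, and any element of it lies in $Y_{k+1}^\infty$ and maps to $y$. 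Choosing $y_1\in Y_1^\infty$ and lifting successively gives a compatible sequence $(y_k)$. Alternatively, the step can be phrased via Tychonoff: $\prod Y_k$ is compact and the compatibility conditions define closed sets with the finite intersection property. I will use the elementary argument.

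Finally I will transfer the result back to the full system. Given $(y_k)$, for each $n$ choose any $k$ with $n\mid k!$ and set $x_n=T_{k!,n}(y_k)$. This is independent of $k$ because if $k\le k'$, then $T_{k'!,n}=T_{k!,n}\circ T_{k'!,k!}$ and $T_{k'!,k!}(y_{k'})=y_k$. Compatibility, namely $T_{n,m}(x_n)=x_m$ for $m\mid n$, follows from choosing a common $k$ with $n\mid k!$ and using transitivity of the transition maps. Hence $(x_n)\in\varprojlim_n X_n$.

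The only real content is the stabilization and surjectivity step in the second paragraph. The main point to keep straight there is that finiteness is used twice, once for $Y_k^\infty\ne\varnothing$ and once for lifting into $Y_{k+1}^\infty$.
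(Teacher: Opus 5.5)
Your proof is correct and is essentially the paper's argument: stable images are nonempty and have surjective restricted transition maps by finiteness, you choose recursively along the cofinal chain $(k!)$, and then push down via the transition maps. The only difference is that you restrict to the chain before forming stable images, which gives the textbook sequential K\"onig argument. The paper instead forms $\bigcap_{n\mid m}T_{m,n}(X_m)$ over the whole divisibility system, using a minimal-image multiple $d_n$ and least common multiples, and uses the chain $(k!)$ only for the final recursive choice; by cofinality the two kinds of stable sets coincide.
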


\begin{proof}
The assertion is a special case of
\cite[Ch.~III, \S7, no.~4, Theorem~1]{BourbakiSets}.
For completeness, we give an elementary proof
adapted to the present setting.
For each $n\geq1$, put
$Y_n=\bigcap_{n\mid m}T_{m,n}(X_m)$.
Choose a multiple $d_n$ of $n$ with
$|T_{d_n,n}(X_{d_n})|$ minimal.
If $n\mid m$ and $r=\operatorname{lcm}(d_n,m)$, then
$
T_{r,n}(X_r)
\subseteq
T_{d_n,n}(X_{d_n})\cap T_{m,n}(X_m)$.
Minimality therefore gives
$T_{r,n}(X_r)=T_{d_n,n}(X_{d_n})$.
Thus,
$Y_n=T_{d_n,n}(X_{d_n})\neq\varnothing$,
and $T_{r,n}(X_r)=Y_n$ whenever $d_n\mid r$.
If $n\mid m$, choose a common multiple $r$ of $d_n$ and $d_m$.
Then $T_{m,n}(Y_m)
=T_{m,n}(T_{r,m}(X_r)) =T_{r,n}(X_r) =Y_n$.
Thus the restricted transition maps are surjective.
We may therefore choose recursively $y_k\in Y_{k!}$ such that
$T_{(k+1)!,k!}(y_{k+1})=y_k$ for every $k\geq1$.
Put $x_n=T_{n!,n}(y_n)$.
If $n\mid m$, the recursive relations give
$T_{m!,n!}(y_m)=y_n$, and hence
$T_{m,n}(x_m) =T_{m!,n}(y_m) =T_{n!,n}(y_n)=x_n$.
Consequently $(x_n)_{n\geq1}\in\varprojlim_n X_n$.
\end{proof}

For an inverse system $(X_i,p_{ji})_{i\in I}$, call a
family $(x_j)_{j\in S}$, where $S\subseteq I$ and
$x_j\in X_j$, \emph{finitely liftable} if, for every
finite subset $F\subseteq S$, there exist an upper
bound $k\in I$ of $F$ and an element $y\in X_k$
such that $p_{kj}(y)=x_j$ for all $j\in F$.

\begin{lemma}\label{lem:finite-liftable-extension}
Let $\kappa$ be an infinite cardinal, and let
$(X_i,p_{ji})_{i\in I}$ be an inverse system of
nonempty sets indexed by a directed set $I$
with $|I|\leq\kappa$.
Suppose that, whenever $S\subseteq I$ satisfies
$|S|<\kappa$ and $i\in I\setminus S$, every finitely
liftable family on $S$ extends to a finitely liftable
family on $S\cup\{i\}$.
Then, for every directed subset $J\subseteq I$
with $|J|<\kappa$, the restriction map
$\varprojlim_{i\in I}X_i \to \varprojlim_{j\in J}X_j$
is surjective.
\end{lemma}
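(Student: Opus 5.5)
Given an element $(x_j)_{j\in J}\in\varprojlim_{j\in J}X_j$, I first observe that it is a finitely liftable family on $S_0=J$. Indeed, for a finite $F\subseteq J$, directedness of $J$ gives an upper bound $k\in J$ of $F$. Taking $y=x_k$ and using compatibility, $p_{kj}(x_k)=x_j$ for all $j\in F$. The goal is then to extend this family, one index at a time, to a finitely liftable family on all of $I$. I then note that a finitely liftable family $(x_i)_{i\in I}$ defined on the whole of $I$ is automatically an element of $\varprojlim_{i\in I}X_i$. Given $i\le j$, apply liftability to $F=\{i,j\}$: there are $k\ge i,j$ and $y\in X_k$ with $p_{kj}(y)=x_j$ and $p_{ki}(y)=x_i$. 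Hence
\[
p_{ji}(x_j)=p_{ji}(p_{kj}(y))=p_{ki}(y)=x_i.
\]
Such an element restricts to the given family on $J$, which proves surjectivity.

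To perform the extension I use transfinite recursion. Since $|I|\le\kappa$, I enumerate $I\setminus J$ as $(i_\alpha)_{\alpha<\mu}$ with $\mu\le\kappa$ an ordinal. I choose the enumeration by an initial ordinal of cardinality $|I\setminus J|\le\kappa$, so that every proper initial segment has cardinality $<\kappa$. Put $S_\alpha=J\cup\{i_\beta:\beta<\alpha\}$, and build finitely liftable families $\xi_\alpha$ on $S_\alpha$ with $\xi_\beta=\xi_\alpha|_{S_\beta}$ for $\beta\le\alpha$. The successor step works because $|S_\alpha|\le|J|+|\alpha|<\kappa$; here I use that $\kappa$ is infinite, $|J|<\kappa$ and $|\alpha|<\kappa$. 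The hypothesis then extends $\xi_\alpha$ to a finitely liftable family on $S_\alpha\cup\{i_\alpha\}=S_{\alpha+1}$. At limit stages (and at the final stage $\mu$) I take the union of the previous families.

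The main point to check is that finite liftability survives unions of chains. A finite $F\subseteq S_\lambda$, with $\lambda$ a limit, lies in some $S_\beta$ with $\beta<\lambda$, because $F$ involves only finitely many indices $i_\gamma$ with $\gamma<\lambda$. The required lift therefore already exists from the stage $\xi_\beta$. This step and the union at stage $\mu$ need no cardinality bound, since the hypothesis is invoked only at successor steps with $\alpha<\mu$. The subtle part is the cardinality bookkeeping at successor steps. This is exactly why the enumeration is taken by an initial ordinal: then $|\alpha|<\kappa$ for all $\alpha<\mu$ even when $|I\setminus J|=\kappa$. The resulting family on $S_\mu=I$ is finitely liftable, which finishes the argument. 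Nonemptiness of the $X_i$ is not otherwise needed, since the hypothesis provides the extensions.
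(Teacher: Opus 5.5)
Your proposal is correct and follows essentially the same route as the paper: check that the coherent family on $J$ is finitely liftable, enumerate $I\setminus J$ by an initial ordinal so that $|S_\alpha|<\kappa$ at every successor step, take unions at limit stages, and observe that a finitely liftable family on all of $I$ is coherent. The only difference is cosmetic: the paper fixes a well-ordering of $\bigcup_{i\in I}X_i$ so that the successor choices are definite, whereas you leave this use of choice implicit in the recursion.
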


\begin{proof}
Fix a directed subset $J\subseteq I$ with $|J|<\kappa$
and a coherent family $(a_j)_{j\in J}$.
This family is finitely liftable, since every finite
subset of $J$ has an upper bound in $J$.
We extend this family by transfinite recursion,
maintaining finite liftability by transfinite induction;
see \cite[Ch.~III, \S2, no.~2, C59--C60]{BourbakiSets}.

Write $I\setminus J=\{i_\alpha:\alpha<\mu\}$,
where $\mu=|I\setminus J|\leq\kappa$ is regarded
as its initial ordinal, and put
$S_\alpha=J\cup\{i_\beta:\beta<\alpha\}$
for $\alpha\leq\mu$.
To make the choices definite, fix a well-ordering
of $\bigcup_{i\in I}X_i$.
Start with $x_j=a_j$ for $j\in J$.
Suppose that $\alpha<\mu$ and a finitely liftable
family has been constructed on $S_\alpha$.
Since $|J|<\kappa$ and $|\alpha|<\mu\leq\kappa$,
we have $|S_\alpha|<\kappa$.
The hypothesis therefore allows us to choose
$x_{i_\alpha}\in X_{i_\alpha}$ so that the extended
family on $S_{\alpha+1}$ remains finitely liftable.
Choose the least such value in the fixed well-ordering.
At a nonzero limit stage $\lambda\leq\mu$, take
the union of the previously constructed families.
Every finite subset of $S_\lambda$ is contained
in some $S_\alpha$ with $\alpha<\lambda$.
Thus every finite subfamily already has a common
lift, and the union remains finitely liftable.
The recursion produces a finitely liftable family
$(x_i)_{i\in I}$ extending $(a_j)_{j\in J}$.
This family is coherent: if $i\leq j$, choose
a common lift $y\in X_k$ of $x_i$ and $x_j$.
Compatibility of the transition maps gives
$x_i = p_{ki}(y) = p_{ji}(p_{kj}(y))= p_{ji}(x_j)$.
Hence $(x_i)_{i\in I}\in\varprojlim_{i\in I}X_i$
extends $(a_j)_{j\in J}$, proving surjectivity.
\end{proof}

The following result is a cardinal-bounded variant
of the surjective case of Lenstra's inverse-limit
theorem \cite[Proposition~(1.5)(c)]{Lenstra},
which extends a theorem of Bourbaki
\cite[Ch.~III, \S7, no.~4, Theorem~1]{BourbakiSets}.
Recall that a family of subsets has the 
\emph{finite intersection property}
if every finite subfamily has nonempty intersection.

\begin{theorem}
\label{thm:cardinal-bounded-inverse-limit}
Let $\kappa$ be an infinite cardinal, and let
$(X_i,p_{ji})_{i\in I}$ be an inverse system of
nonempty sets with surjective transition maps,
indexed by a directed set $I$ with $|I|\leq\kappa$.
For each $i\in I$, suppose that a collection
$\mathcal C_i$ of subsets of $X_i$ satisfies:
\begin{enumerate}
\item[\rm (i)]
$X_i\in\mathcal C_i$, and $\mathcal C_i$ is closed
under finite intersections.

\item[\rm (ii)]
Every subfamily of $\mathcal C_i$ of cardinality
less than $\kappa$ having the finite intersection
property has nonempty intersection.

\item[\rm (iii)]
For $i\leq j$ and $x\in X_i$, the fibre
$p_{ji}^{-1}(\{x\})$ belongs to $\mathcal C_j$.

\item[\rm (iv)]
For $i\leq j$ and $C\in\mathcal C_j$, the image
$p_{ji}(C)$ belongs to $\mathcal C_i$.
\end{enumerate}
Then, for every directed subset $J\subseteq I$ 
such that $|J|<\kappa$, the restriction map
$\varprojlim_{i\in I}X_i \to
\varprojlim_{j\in J}X_j$
is surjective.
\end{theorem}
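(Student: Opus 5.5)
The plan is to reduce the statement to Lemma~\ref{lem:finite-liftable-extension}. Fix $S\subseteq I$ with $|S|<\kappa$, a finitely liftable family $(x_j)_{j\in S}$, and an index $i\in I\setminus S$. We must find $x_i\in X_i$ such that the family on $S\cup\{i\}$ is still finitely liftable. For a finite subset $F\subseteq S$ and an upper bound $k$ of $F\cup\{i\}$, consider the set
\[
C_{F,k}=p_{ki}\Bigl(\bigcap_{j\in F}p_{kj}^{-1}(\{x_j\})\Bigr)\subseteq X_i .
\]
By (iii), each fibre $p_{kj}^{-1}(\{x_j\})$ lies in $\mathcal C_k$. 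By (i), their finite intersection lies in $\mathcal C_k$; when $F=\varnothing$ we use $X_k\in\mathcal C_k$. Then (iv) gives $C_{F,k}\in\mathcal C_i$. Any $x_i$ lying in every $C_{F,k}$ makes the extended family finitely liftable. Indeed, a finite subset of $S\cup\{i\}$ containing $i$ has the form $F\cup\{i\}$, and $x_i\in C_{F,k}$ supplies $y\in X_k$ with $p_{kj}(y)=x_j$ for $j\in F$ and $p_{ki}(y)=x_i$.

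The next step is to check that the family $\{C_{F,k}\}$ is nonempty and has the finite intersection property. Given finitely many pairs $(F_1,k_1),\dots,(F_n,k_n)$, put $F=\bigcup F_r$. Finite liftability of the original family gives an upper bound $m$ of $F$ and an element $y\in X_m$ lifting all $x_j$ with $j\in F$. Choose an upper bound $k'$ of $m$, of $i$ and of all $k_r$, and use surjectivity of $p_{k'm}$ to lift $y$ to $z\in X_{k'}$. Then $p_{k'i}(z)$ lies in each $C_{F_r,k_r}$, witnessed by $p_{k'k_r}(z)$, since $p_{k_rj}(p_{k'k_r}(z))=p_{k'j}(z)=x_j$ for $j\in F_r$. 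Surjectivity of the transition maps is used exactly here, and this is the one delicate point: the original lift lives at some index $m$ that need not lie above the $k_r$ or $i$, so we must push it up.

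The remaining issue is cardinality, since we need the family to have fewer than $\kappa$ members to apply (ii). The pairs $(F,k)$ range over finite subsets of $S$ and elements of $I$, which gives up to $\kappa$ many. We therefore do not vary $k$ freely. For each finite $F$, fix one upper bound $k_F$ of $F\cup\{i\}$ and use only the sets $C_F=C_{F,k_F}$. This suffices: if $x_i\in C_{F}$, the witness at $k_F$ already shows that the finite subset $F\cup\{i\}$ has a common lift. The argument above shows that $\{C_F\}$ has the finite intersection property. The number of finite subsets of $S$ is at most $\max(|S|,\aleph_0)$. If $S$ is infinite this is $|S|<\kappa$, and if $S$ is finite it is countable, which is at most $\kappa$. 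The countable case is harmless, because when $S$ is finite we may take $F=S$ alone; indeed $C_S\subseteq C_F$ holds whenever all sets use a common index $k_S$. So (ii) yields a point $x_i\in\bigcap_F C_F$.

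Finally, Lemma~\ref{lem:finite-liftable-extension} applies, since the nonemptiness of the $X_i$ is assumed, and it gives surjectivity of the restriction map for every directed $J\subseteq I$ with $|J|<\kappa$.
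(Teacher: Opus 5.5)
Your proposal is correct and follows the paper's proof essentially step for step: the same sets $C_F\in\mathcal C_i$ obtained from (i), (iii) and (iv), the same use of surjectivity to push a common lift up to a larger index, and the same appeal to (ii) via the fact that $S$ has fewer than $\kappa$ finite subsets. The only difference is bookkeeping: the paper shows $C_F$ is independent of the chosen upper bound and uses $C_{F_1\cup\cdots\cup F_r}\subseteq\bigcap_r C_{F_r}$, while you fix $k_F$ and check the finite intersection property directly. Your detour for finite $S$ is unnecessary, since a finite set has only finitely many subsets, already fewer than $\kappa$.
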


\begin{proof}
By Lemma~\ref{lem:finite-liftable-extension}, it
suffices to verify the extension property for
finitely liftable partial families.
Fix such a family $(x_j)_{j\in S}$ with
$|S|<\kappa$, and let $i\in I\setminus S$.

First observe that a finite family having a common
lift in some $X_\ell$ has a common lift in $X_k$
for every upper bound $k$ of its indices.
Indeed, choose $r\geq k,\ell$, lift the given
element from $X_\ell$ to $X_r$ using surjectivity,
and project to $X_k$.

For each finite $F\subseteq S$, choose an upper
bound $k$ of $F\cup\{i\}$. We now define
$C_F=
p_{ki}(
\bigcap_{j\in F}p_{kj}^{-1}(\{x_j\})).$
When $F=\varnothing$, the intersection inside
the parentheses is understood to be $X_k$.
Conditions~(i), (iii), and~(iv) imply that
$C_F\in\mathcal C_i$.

By the preceding observation, $C_F$ is independent
of the chosen upper bound $k$.
It consists precisely of those $u\in X_i$ such
that $u$ and $(x_j)_{j\in F}$ have a common lift.
Finite liftability and the same observation show
that $C_F\neq\varnothing$.
Moreover, for any finite subsets
$F_1,\ldots,F_r\subseteq S$, we have
$C_{F_1\cup\cdots\cup F_r}
\subseteq C_{F_1}\cap\cdots\cap C_{F_r}$.
Thus the family of all $C_F$ has the finite
intersection property.

Since $|S|<\kappa$, there are fewer than $\kappa$
finite subsets of $S$.
Condition~(ii) therefore allows us to choose
$x_i\in
\bigcap_{F\subseteq S, \ F\text{ finite}}C_F$.
In particular, $x_i\in C_{\varnothing}=X_i$.
For every finite $F\subseteq S$, membership in
$C_F$ means that $x_i$ and $(x_j)_{j\in F}$
have a common lift.
Hence adjoining $x_i$ preserves finite liftability.
\end{proof}

The following result will be used to prove
Theorem~\ref{thm:main-field} over infinite base fields.

\begin{proposition}\label{prop:relative-affine-open-limit}
Let $K$ be an infinite field and let $(V_i,p_{ji})$ be an
inverse system of nonempty finite-dimensional affine
$K$-spaces with surjective affine-linear transition maps,
indexed by a directed set $I$ with $|I|\leq|K|$.
Let $U_i\subseteq V_i$ be nonempty Zariski-open subsets
satisfying $p_{ji}(U_j)=U_i$ whenever $i\leq j$.
Then, for every directed subset $J\subseteq I$ with
$|J|<|K|$, the restriction map
$\varprojlim_{i\in I}U_i \to \varprojlim_{j\in J}U_j$
is surjective.
\end{proposition}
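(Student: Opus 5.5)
We apply Theorem~\ref{thm:cardinal-bounded-inverse-limit} with $\kappa=|K|$, which is infinite since $K$ is, to the inverse system $X_i=U_i$ with the restricted transition maps $p_{ji}|_{U_j}:U_j\to U_i$. These maps are well defined and surjective precisely because $p_{ji}(U_j)=U_i$. Each $U_i$ is nonempty, and $|I|\leq\kappa$. It remains to choose the collections $\mathcal C_i$ and verify conditions (i)--(iv).

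For each $i$, let $\mathcal C_i$ consist of all subsets of $U_i$ of the form $O\cap W$, where $O\subseteq V_i$ is a nonempty Zariski-open subset and $W\subseteq V_i$ is an affine subspace, and require additionally that $O\cap W\neq\varnothing$. We could equivalently take nonempty Zariski-open subsets of affine subspaces, for the subspace topology.

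The conditions are checked as follows.

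For (i), take $O=U_i$ and $W=V_i$. The intersection of two members $O\cap W$ and $O'\cap W'$ is $(O\cap O')\cap(W\cap W')$. If this is empty, closure fails, so we must adjoin $\varnothing$ to $\mathcal C_i$. This is harmless for (ii), since a family with the finite intersection property never contains $\varnothing$. Note that $W\cap W'$ is again an affine subspace whenever it is nonempty.

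For (iii), the fibre of $p_{ji}|_{U_j}$ over $x\in U_i$ is $U_j\cap p_{ji}^{-1}(x)$. Here $p_{ji}^{-1}(x)$ is an affine subspace, and the fibre is nonempty by surjectivity onto $U_i$.

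For (iv), let $O\cap W\in\mathcal C_j$. Then $p_{ji}(W)$ is an affine subspace of $V_i$, and $p_{ji}|_W:W\to p_{ji}(W)$ is a surjective affine-linear map. By Lemma~\ref{lem:affine-open}, $p_{ji}(O\cap W)$ is Zariski-open in $p_{ji}(W)$. It is therefore of the form $O'\cap p_{ji}(W)$ for some open $O'\subseteq V_i$, and we may intersect with $U_i$, since the image already lies in $U_i$. It is nonempty. So the image lies in $\mathcal C_i$, with $O'$ replaced by $O'\cap U_i$, which is nonempty because the image is.

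The main work is (ii). Let $\{O_\lambda\cap W_\lambda\}_{\lambda\in\Lambda}$ have the finite intersection property, with $|\Lambda|<|K|$. Affine subspaces of the finite-dimensional space $V_i$ satisfy the descending chain condition, since dimension strictly drops along a strictly descending chain. Hence there are finitely many indices $\lambda_1,\dots,\lambda_r$ such that $W:=W_{\lambda_1}\cap\cdots\cap W_{\lambda_r}$ is minimal among finite intersections. By the finite intersection property, $W\neq\varnothing$, and for every $\lambda$ we have $W\subseteq W_\lambda$ by minimality.

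For each $\lambda$, the set $O_\lambda\cap W$ is then Zariski-open in the affine space $W$. It is nonempty: the finite intersection of the members indexed by $\lambda,\lambda_1,\dots,\lambda_r$ is nonempty and lies in $O_\lambda\cap W$.

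Applying Lemma~\ref{lem:small-open-intersection} to the family $\{O_\lambda\cap W\}_{\lambda\in\Lambda}$ in the nonempty affine space $W$, with fewer than $|K|$ members, gives a point of $W$ lying in every $O_\lambda$. This point lies in every $O_\lambda\cap W_\lambda$, so the whole family has nonempty intersection.

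Theorem~\ref{thm:cardinal-bounded-inverse-limit} then yields surjectivity of $\varprojlim_{I}U_i\to\varprojlim_{J}U_j$ for every directed $J\subseteq I$ with $|J|<|K|$.

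The expected obstacle is making (iv) and (ii) compatible. The images of fibres are not open in $V_i$, which forces the inclusion of affine subspaces in $\mathcal C_i$. That inclusion in turn makes the descending-chain reduction necessary before Lemma~\ref{lem:small-open-intersection} can be invoked.
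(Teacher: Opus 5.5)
Your proof is correct and follows essentially the same route as the paper. You use the same collections $\mathcal C_i$ (nonempty subsets of $U_i$ that are relatively Zariski open in an affine subspace, together with $\varnothing$), Lemma~\ref{lem:affine-open} for (iv), and the same minimal-finite-intersection argument that reduces (ii) to Lemma~\ref{lem:small-open-intersection}. The only omission is the trivial case $\Lambda=\varnothing$ in (ii), where the intersection is $U_i\neq\varnothing$.
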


\begin{proof}
Put $\kappa=|K|$. We apply
Theorem~\ref{thm:cardinal-bounded-inverse-limit}
to the inverse system $(U_i,p_{ji}|_{U_j})$.
For each $i\in I$, let $\mathcal C_i$ consist of
$\varnothing$ and all nonempty subsets of $U_i$
that are relatively Zariski open in some affine
subspace of $V_i$.

Clearly, $U_i\in\mathcal C_i$.
If finitely many members of $\mathcal C_i$ have
nonempty intersection, their intersection is relatively
Zariski open in the intersection of the corresponding
affine subspaces.
Since $\varnothing\in\mathcal C_i$, it follows that
$\mathcal C_i$ is closed under finite intersections.

For $i\leq j$ and $x\in U_i$, the fibre of
$p_{ji}|_{U_j}$ over $x$ is
$U_j\cap p_{ji}^{-1}(\{x\})$,
which is relatively Zariski open in the affine
subspace $p_{ji}^{-1}(\{x\})$ of $V_j$.
Hence this fibre belongs to $\mathcal C_j$.
Moreover, if $C\in\mathcal C_j$ is nonempty and
relatively Zariski open in an affine subspace
$B\subseteq V_j$, then
Lemma~\ref{lem:affine-open}, applied to
$p_{ji}|_B:B \to p_{ji}(B)$,
shows that $p_{ji}(C)$ is relatively Zariski open
in the affine subspace $p_{ji}(B)$.
Since $p_{ji}(C)\subseteq U_i$, we have
$p_{ji}(C)\in\mathcal C_i$.
The image of the empty set also belongs to
$\mathcal C_i$.

It remains to verify the intersection property.
Fix $i\in I$, and let
$\{C_\lambda:\lambda\in\Lambda\}\subseteq\mathcal C_i$
have the finite intersection property, with
$|\Lambda|<\kappa$.
The case $\Lambda=\varnothing$ follows from
$U_i\neq\varnothing$, so assume $\Lambda\neq\varnothing$.
For each $\lambda$, choose an affine subspace
$A_\lambda\subseteq V_i$ in which $C_\lambda$
is relatively Zariski open.

For every nonempty finite $F\subseteq\Lambda$, put
$A_F=\bigcap_{\lambda\in F}A_\lambda$.
These affine subspaces are nonempty by the finite
intersection property.
Choose a nonempty finite $F_0\subseteq\Lambda$
for which $\dim A_{F_0}$ is minimal, and put
$A=A_{F_0}$.
For every $\lambda\in\Lambda$,  
$A_{F_0\cup\{\lambda\}} = A\cap A_\lambda\subseteq A$.
Minimality of dimension gives
$A\cap A_\lambda=A$, so $A\subseteq A_\lambda$.
Consequently, $C_\lambda\cap A$ is relatively
Zariski open in $A$.
It is also nonempty, since it contains
$\bigcap_{\nu\in F_0\cup\{\lambda\}}C_\nu$,
which is nonempty by the finite intersection property.

Lemma~\ref{lem:small-open-intersection} now gives
$\varnothing\neq
\bigcap_{\lambda\in\Lambda}(C_\lambda\cap A)
\subseteq
\bigcap_{\lambda\in\Lambda}C_\lambda$.
All hypotheses of
Theorem~\ref{thm:cardinal-bounded-inverse-limit}
are therefore satisfied, and the desired
surjectivity follows.
\end{proof}

\begin{remark}\label{rem:relative-limit-counterexample}
The assumption $|J|<|K|$ in
Proposition~\ref{prop:relative-affine-open-limit}
cannot be replaced by $|J|\leq|K|$.
We adapt Waterhouse's example \cite{Waterhouse1972}.
Let $K$ be an infinite field, put $E=K\sqcup\{*\}$,
and let $I$ be the set of finite subsets of $E$,
ordered by inclusion.
Put $J=\{F\in I \mid F\subseteq K\}$.
Then $I$ and $J$ are directed and
$|I|=|J|=|K|$.
For $F\in I$, put $V_F=K^F$ and
$U_F=\{x\in K^F:x_a\neq x_b
       \text{ whenever }a,b\in F,\ a\neq b\}$.
These sets are nonempty and Zariski open.
The coordinate restriction maps are surjective linear
maps between the $V_F$ and are also surjective on
the $U_F$: every injection from a finite subset of
$E$ into $K$ extends to any larger finite subset,
since $K$ is infinite.
However, the coherent family
$(\operatorname{id}_K|_F)_{F\in J}$
cannot extend to an element of
$\varprojlim_{F\in I}U_F$.
Such an extension would define an injection $E\to K$
whose restriction to $K$ is the identity, leaving
no possible value for the additional element $*$.
\end{remark}

\subsection{Algebras}

All $K$-algebras are assumed to be associative and unital,
but need not be commutative.
Their multiplicative identities are denoted by $1$.
A $K$-algebra homomorphism is a $K$-linear,
multiplicative map preserving $1$.
If $R$ is a unital $K$-subalgebra of $T$, we call
$T$ a $K$-algebra extension of $R$ and write $T/R$.
The group of units of $R$ is denoted by $R^\times$.

\begin{lemma}\label{lem:units-open}
Let $A$ be a finite-dimensional $K$-algebra.
Then $A^\times$ is a nonempty basic Zariski-open
subset of the affine $K$-space $A$.
\end{lemma}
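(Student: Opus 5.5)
We identify $A^\times$ as the non-vanishing locus of the determinant of left multiplication. Let $n=\dim_K A$ and fix a $K$-basis $e_1,\dots,e_n$ of $A$, which gives affine coordinates on $A$. For $a\in A$, let $\lambda_a:A\to A$ be left multiplication, $\lambda_a(x)=ax$. This map is $K$-linear. Its matrix in the chosen basis has entries that are linear forms in the coordinates of $a$, since multiplication is $K$-bilinear. Hence $f(a)=\det(\lambda_a)$ is a polynomial function on $A$. The claim is that $A^\times=D(f)$.

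If $a$ is a unit, then $\lambda_{a^{-1}}\circ\lambda_a=\lambda_1=\operatorname{id}$. So $\lambda_a$ is invertible and $f(a)\neq0$.

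Conversely, suppose $f(a)\neq0$. Then $\lambda_a$ is bijective, so there is $b\in A$ with $ab=1$. To see that $b$ is also a left inverse, we use finite-dimensionality: $\lambda_a\circ\lambda_b=\lambda_{ab}=\operatorname{id}$. A one-sided inverse of a linear endomorphism of a finite-dimensional space is two-sided, so $\lambda_b\circ\lambda_a=\operatorname{id}$. Applying this to $1$ gives $ba=1$. Thus $a\in A^\times$.

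Finally, $1\in A^\times$, so $A^\times$ is nonempty. In particular $f(1)=1\neq0$, so $f$ is not the zero polynomial. Therefore $A^\times=D(f)$ is a nonempty basic Zariski-open subset.

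The only point needing care is the converse direction, namely getting a two-sided inverse from the bijectivity of $\lambda_a$ when $A$ is noncommutative. The finite-dimensionality argument above handles this.
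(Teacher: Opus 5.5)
Your proof is correct and follows essentially the same route as the paper: both show $A^\times=D(f)$ with $f(a)=\det(\lambda_a)$, and both get nonemptiness from $f(1)=1$. The only difference is in obtaining $ba=1$: the paper uses injectivity of $\lambda_a$ on $a(ba)=a$, while you use $\lambda_b=\lambda_a^{-1}$. Your step is equally valid, and it does not actually need finite-dimensionality, since $\lambda_a$ is already known to be bijective.
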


\begin{proof}
For $a\in A$, let $L_a:A\to A$ denote left
multiplication by $a$.
An element $a$ is invertible if and only if $L_a$
is invertible. Indeed, if $L_a$ is invertible, choose
$b\in A$ with $ab=1$; injectivity of $L_a$ then gives
$ba=1$.
The function $d:A\to K$, defined by
$d(a)=\det(L_a)$, is polynomial in linear coordinates.
Thus $A^\times=D(d)$, which is nonempty since $d(1)=1$.
\end{proof}

\begin{proposition}\label{prop:unit-lifting-zariski}
Let $R$ and $S$ be finite-dimensional $K$-algebras,
and let $f:R\to S$ be a surjective $K$-algebra
homomorphism. Then $f(R^\times)=S^\times$.
\end{proposition}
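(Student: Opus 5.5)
The inclusion $f(R^\times)\subseteq S^\times$ is immediate: $f$ preserves $1$ and is multiplicative, so $f(u)f(u^{-1})=f(1)=1$ and likewise on the other side. The real content is the reverse inclusion. Given $s\in S^\times$, we must find a unit $r\in R^\times$ with $f(r)=s$. The fibre $f^{-1}(\{s\})$ is a nonempty affine subspace $a+N$ of $R$, where $N=\ker f$, so the task is to show that this affine subspace meets $R^\times$. Despite the label, I would not use Zariski density here, since that argument fails over finite $K$. I would argue ring-theoretically instead, via the Jacobson radical and semisimplicity.

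First, reduce modulo the Jacobson radical. Let $J=J(R)$, which is nilpotent because $R$ is finite-dimensional, hence Artinian. An element of $R$ is a unit if and only if its image in $R/J$ is a unit: lifting an inverse $b$ of $\bar r$ gives $rb\in 1+J$, and $1+J\subseteq R^\times$. Since $f$ is surjective, $f(J)\subseteq J(S)$ (surjective maps send the radical into the radical, e.g.\ via maximal left ideals). Hence $f$ induces a surjection $\bar f:R/J\to S/f(J)$. The same unit criterion applies to $S$ with respect to the nilpotent ideal $f(J)$. So it suffices to lift units along $\bar f$, and we may assume $R$ is semisimple.

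Now take $R$ semisimple. Then $N=\ker f$ is a two-sided ideal, so $R\cong N\times R'$ as algebras, with $f$ restricting to an isomorphism $R'\to S$; concretely, $N=Re$ for a central idempotent $e$. Given $s\in S^\times$, let $r'\in R'$ be its preimage, which is a unit of $R'$. Then $r=e+r'$ is a unit of $R$ with inverse $e+r'^{-1}$, and $f(r)=s$. Lifting this back through the radical reduction completes the proof.

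The main obstacle is justifying the structural facts cleanly: that $J(R)$ is nilpotent, that $R/J(R)$ is semisimple, and that ideals of semisimple algebras are direct factors with central idempotents. These are standard Wedderburn–Artin facts, and I would cite them rather than reprove them. One could try instead to intersect the Zariski-open set $R^\times\cap(a+N)$ with Lemma~\ref{lem:units-open}, but that only proves the set is open, not nonempty, so the ring-theoretic route is the one to pursue.
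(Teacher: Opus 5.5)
Your proof is correct, but it takes a different route from the paper.

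\textbf{The paper's argument.} The paper splits into two cases.
\begin{enumerate}
\item For infinite $K$, it combines Lemma~\ref{lem:units-open} with Lemma~\ref{lem:affine-open} to see that $H=f(R^\times)$ is a nonempty Zariski-open subgroup of $S^\times$. Then, for $s\in S^\times$, the nonempty open sets $H$ and $sH$ must meet by density, which forces $s\in H$.
\item For finite $K$, it takes a preimage $a$ of $s$ and uses finiteness of $R$ to produce an idempotent $e=a^n$ with $f(e)=1$. It then checks that $u=ae+(1-e)$ satisfies $u^n=1$ and $f(u)=s$.
\end{enumerate}

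\textbf{Your argument.} You reduce modulo $J(R)$ and split $\ker\bar f$ off as a direct factor of the semisimple quotient. This is uniform in $K$. In fact it proves the more general statement for Artinian rings that the paper only cites from Lenstra, and even for semilocal rings: nilpotency of $J(R)$ is not needed, since $1+J(R)\subseteq R^\times$ holds in any ring and $f(J(R))\subseteq J(S)$ already gives $1+f(J(R))\subseteq S^\times$. The cost is reliance on Wedderburn--Artin structure theory. The paper's argument is self-contained given its Zariski preliminaries and needs only the elementary idempotent trick over finite fields.

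\textbf{Two small points.}
\begin{enumerate}
\item The step ``lifting back through the radical reduction'' needs a one-line correction. A lift $r_0$ of the unit $\bar r$ only satisfies $f(r_0)-s\in f(J(R))$. Write $f(r_0)-s=f(j)$ with $j\in J(R)$ and replace $r_0$ by $r_0-j$. This element is still a unit, since it is congruent to $r_0$ modulo $J(R)$, and it now maps exactly to $s$.
\item Your closing remark undersells the Zariski approach. It is true that openness of $R^\times\cap(a+\ker f)$ alone says nothing about nonemptiness. The paper sidesteps this by working with the open subgroup $f(R^\times)$ of $S^\times$ rather than with a single fibre. So the Zariski method does settle the infinite-field case, and only finite $K$ requires a different idea.
\end{enumerate}
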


\begin{proof}
This holds more generally for surjective homomorphisms
from Artinian rings; see \cite[Lemma~(2.6)]{Lenstra}.
For completeness, we give a direct proof in two cases.
The inclusion $f(R^\times)\subseteq S^\times$ is immediate. Now we show the reverse inclusion.

Case 1: $K$ is infinite.
By Lemma~\ref{lem:units-open}, $R^\times$ is a
nonempty Zariski-open subset of $R$.
Since $f$ is surjective and $K$-linear,
Lemma~\ref{lem:affine-open} shows that
$H:=f(R^\times)$ is a nonempty Zariski-open subset
of $S$. Moreover, $H$ is a subgroup of $S^\times$.
Take $s\in S^\times$. Left multiplication by $s$
is a $K$-linear automorphism of $S$, so $sH$
is also nonempty and Zariski open in $S$.
Since $K$ is infinite, $H\cap sH\neq\varnothing$.
Thus $s=h_1h_2^{-1}\in H$ for some $h_1,h_2\in H$.
This proves $f(R^\times) \supseteq S^\times$.

Case 2: $K$ is finite.
Let $s\in S^\times$ and choose $a\in R$ with $f(a)=s$.
Since $R$ is finite-dimensional over the finite field
$K$, it is finite. The powers of $a$ are therefore
eventually periodic.
Taking $n$ sufficiently large and divisible by the
period gives $a^{2n}=a^n$.
Thus $e:=a^n$ is idempotent.
Since $f(e)=s^n$ is both a unit and an idempotent,
we have $f(e)=1$.
Put $u=ae+(1-e)$.
Since $a$ commutes with $e$, the two summands
annihilate one another, and
$u^n=(ae)^n+(1-e)=e+(1-e)=1$.
Hence $u\in R^\times$, and
$f(u)=f(a)f(e)+1-f(e)=s$.
This proves $f(R^\times) \supseteq S^\times$.
\end{proof}

\section{Finite Galois ring extensions}\label{sec:finitegaloisring}

Let $G$ be a finite group and let $S/R$ be an extension
of commutative $K$-algebras.
Following \cite{ChaseHarrisonRosenberg}, we call $S/R$
\emph{$G$-Galois} if $S$ is equipped with an action
of $G$ by $R$-algebra automorphisms, $S^G=R$, and
the canonical map
\[
S\otimes_R S \ni (a \otimes b) \mapsto 
(a\,\sigma(b))_{\sigma\in G} \in
\prod_{\sigma\in G}S
\]
is an isomorphism of $S$-algebras.
Throughout this section, we assume that $S/R$ is
$G$-Galois and that $R$ is finite-dimensional over $K$.

\begin{proposition}\label{prop:galois-finite-dimensional}
The $K$-algebra $S$ is finite-dimensional.
\end{proposition}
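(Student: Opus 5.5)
Since $R$ is finite-dimensional over $K$, it suffices to show that $S$ is a finitely generated $R$-module. Then $\dim_K S\le (\text{number of generators})\cdot\dim_K R<\infty$. The plan is to extract finite generation from the Galois isomorphism
\[
h:S\otimes_R S\to \prod_{\sigma\in G}S,\qquad a\otimes b\mapsto (a\,\sigma(b))_{\sigma\in G}.
\]
Because $h$ is surjective, the element $(\delta_{\sigma,e})_{\sigma\in G}$, with $1$ in the $e$-coordinate and $0$ elsewhere, has a preimage. We write this preimage as a finite sum $\sum_{k=1}^n x_k\otimes y_k$ with $x_k,y_k\in S$. This yields the standard Galois coordinate system: $\sum_k x_k\sigma(y_k)=\delta_{\sigma,e}$ for all $\sigma\in G$.

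Next, we use the trace $\Tr:S\to R$, $\Tr(s)=\sum_{\sigma\in G}\sigma(s)$. It lands in $S^G=R$ and is $R$-linear. For $s\in S$ we compute
\[
\sum_k x_k\Tr(y_k s)=\sum_{\sigma\in G}\Bigl(\sum_k x_k\sigma(y_k)\Bigr)\sigma(s)=s.
\]
Hence $s=\sum_k \Tr(y_k s)\,x_k$ with coefficients $\Tr(y_ks)\in R$. So $x_1,\dots,x_n$ generate $S$ as an $R$-module, and therefore $S$ is spanned over $K$ by the finitely many products $r_jx_k$, where $(r_j)$ is a $K$-basis of $R$.

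There is no serious obstacle. The only points to check are that a finite tensor representation of the preimage exists, which holds because every element of $S\otimes_R S$ is a finite sum of pure tensors, and that $G$ acts by $R$-linear maps, so that $\Tr$ is $R$-linear and the interchange of sums above is legitimate. Finiteness of $G$ is used in defining $\Tr$ and the product.
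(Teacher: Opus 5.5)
Your proof is correct. The reduction is the same as the paper's: $S$ is finitely generated over $R$, and $\dim_K R<\infty$. The difference is in how the first fact is obtained. The paper cites Chase--Harrison--Rosenberg, Theorem~1.3(c), for the fact that $S$ is finitely generated projective over $R$. You instead prove finite generation directly: a preimage of $(\delta_{\sigma,e})_{\sigma\in G}$ under $h$ gives Galois coordinates $x_k,y_k$, and the identity $s=\sum_k \Tr(y_ks)\,x_k$ shows that $x_1,\dots,x_n$ generate $S$ as an $R$-module.

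This is essentially the argument behind the cited theorem. Since the maps $s\mapsto \Tr(y_ks)$ are $R$-linear functionals $S\to R$, your formula is a dual basis, so the dual basis lemma gives projectivity as well. Your version is self-contained and uses only two facts: $(\delta_{\sigma,e})_\sigma$ lies in the image of $h$, and $S^G=R$. In particular it never uses injectivity of $h$. The paper's citation is shorter and records the stronger projectivity statement directly.
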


\begin{proof}
By \cite[Theorem~1.3(c)]{ChaseHarrisonRosenberg},
$S$ is finitely generated projective as an $R$-module.
Since $R$ is finite-dimensional over $K$, so is $S$.
\end{proof}

An element $x\in S$ is called a
\emph{$G$-normal basis generator} for $S/R$ if
the family $(\sigma(x))_{\sigma\in G}$ is an
$R$-basis of $S$.
Such a basis is called a \emph{$G$-normal basis}
for $S/R$.

\begin{theorem}
\label{thm:normalbasissemilocal}
The extension $S/R$ has a $G$-normal basis.
\end{theorem}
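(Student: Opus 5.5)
The plan is to reduce the statement to a unit-lifting problem in the group ring $S[G]$, via the standard Galois-descent isomorphism and Proposition~\ref{prop:unit-lifting-zariski}.

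\textbf{Step 1: Galois descent for $\operatorname{End}_R(S)$.} Consider the twisted group ring $S*G$, with multiplication $(a\sigma)(b\tau)=a\sigma(b)\sigma\tau$. By \cite[Theorem~1.3]{ChaseHarrisonRosenberg}, since $S/R$ is $G$-Galois, the natural map
\[
j: S*G\to \operatorname{End}_R(S),\qquad j(a\sigma)(s)=a\sigma(s),
\]
is an isomorphism of $R$-algebras. Moreover, $S$ is finitely generated projective of constant rank $|G|$ over $R$; the rank can be read off from $S\otimes_R S\cong \prod_{\sigma\in G}S$ locally.

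\textbf{Step 2: freeness of $S$.} Since $R$ is finite-dimensional over $K$, it is Artinian, hence semilocal with finitely many local factors $R=\prod_i R_i$. A finitely generated projective module over a local ring is free, so $S=\prod_i S_i$ with each $S_i$ free of rank $|G|$ over $R_i$. Hence $S\cong R^{|G|}\cong R[G]$ as $R$-modules, where $R[G]$ is the group ring. If ranks varied, this step would need extra care, but constant rank follows from the Galois condition.

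\textbf{Step 3: the normal basis criterion.} An element $x\in S$ is a $G$-normal basis generator if and only if the $R$-linear map
\[
\theta_x:R[G]\to S,\qquad \textstyle\sum r_\sigma\sigma\mapsto\sum r_\sigma\sigma(x),
\]
is bijective. Fix any $R$-isomorphism $\psi:R[G]\to S$ from Step~2. Then $\theta_x$ is bijective exactly when $\psi^{-1}\theta_x$ is an automorphism of $R[G]$, which is a polynomial (determinant) condition in $x$. So the set $N$ of normal basis generators is the nonvanishing locus of $x\mapsto\det(\psi^{-1}\theta_x)$ over $R$. The task is to show that $N$ is nonempty.

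\textbf{Step 4: module-theoretic reformulation.} View $S$ as a left module over the group ring $R[G]$. An element $x$ is a normal basis generator exactly when $S$ is free of rank one over $R[G]$ on $x$. Thus it suffices to show $S\cong R[G]$ as $R[G]$-modules, that is, that the $G$-equivariant structures agree.

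Pass to $S$-scalars: $S\otimes_R S\cong\prod_{\sigma}S\cong S[G]$ as $S[G]$-modules, via the Galois isomorphism, which is $G$-equivariant for the action on the second factor. So $S\otimes_R S$ and $S\otimes_R R[G]$ are isomorphic over $S[G]$, and $S$ is faithfully flat (projective of positive rank) over $R$. To descend this isomorphism I would use that for finite-dimensional algebras over a field, modules that become isomorphic after a faithfully flat finite extension are isomorphic (Noether--Deuring). Concretely, $S$ is a direct summand of $R^n$, so $S\otimes_R S$ restricted to $R[G]$ is $S^{|G|}\cong R[G]^{|G|}$ after restriction. Krull--Schmidt for finitely generated $R[G]$-modules (finite length, since everything is finite-dimensional over $K$) then lets us cancel $|G|$ copies, giving $S\cong R[G]$.

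\textbf{Main obstacle.} The hardest step is this descent. Restricting the $S[G]$-isomorphism $S\otimes_R S\cong S[G]$ to $R[G]$-modules yields $S^{m}\cong R[G]^{m}$ with $m=\operatorname{rank}_R S$ on each local factor. I would get the restriction by viewing $S$ as free over $R$ on each local factor $R_i$. Krull--Schmidt over the Artinian ring $R_i[G]$ then cancels to $S_i\cong R_i[G]$, and gluing over $i$ gives the free generator $x$. If this gets messy, the fallback is Proposition~\ref{prop:unit-lifting-zariski}: lift a unit through the surjection $\operatorname{End}_{R[G]}(R[G]^m)\to\cdots$, using the reduction modulo the radical, where the semisimple case follows from counting multiplicities.
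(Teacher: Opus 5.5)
Your argument is correct, but it takes a different route from the paper. The paper's proof simply invokes Chase--Harrison--Rosenberg, Theorem~4.2(c), which holds for an arbitrary semilocal base ring.

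You prove the statement directly, in three moves:
\begin{itemize}
\item The Galois isomorphism gives $S\otimes_R S\cong\prod_{\sigma\in G}S\cong S\otimes_R R[G]$ as $S[G]$-modules, with $G$ acting on the second tensor factor.
\item The freeness $S\cong R^{|G|}$ from Step~2 turns this, after restricting scalars to $R[G]$, into an isomorphism $S^{|G|}\cong R[G]^{|G|}$.
\item Both modules are finite-dimensional over $K$, hence of finite length, so Krull--Schmidt cancels the exponent. Thus $S\cong R[G]$ as $R[G]$-modules, and the image of $1$ is a normal basis generator.
\end{itemize}

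Your route buys self-containedness. It needs only projectivity of $S$ over $R$ (Chase--Harrison--Rosenberg, Theorem~1.3) and the rank computation you sketch. It costs generality, because it uses finite-dimensionality over $K$ essentially. Over a general semilocal $R$ the modules need not have finite length, so Krull--Schmidt is unavailable and one must cancel by reducing modulo the Jacobson radical, which is essentially your fallback. The citation covers that case too.

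A few small points in your write-up:
\begin{itemize}
\item The appeal to Noether--Deuring is misplaced, since $R\to S$ is not a field extension.
\item ``$S$ is a direct summand of $R^n$'' would not give $S\otimes_R S\cong S^{|G|}$. What you actually use, correctly, is the global freeness from Step~2. So the ``main obstacle'' is not really an obstacle, and the factor-by-factor treatment is unnecessary.
\item The isomorphism $S*G\cong\operatorname{End}_R(S)$ in Step~1 and the determinant description in Step~3 are never used.
\end{itemize}
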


\begin{proof}
Since $R$ is finite-dimensional over $K$, it is
Artinian and hence semilocal.
The assertion therefore follows from
\cite[Theorem~4.2(c)]{ChaseHarrisonRosenberg}.
\end{proof}

For $x\in S$, define its \emph{$G$-resolvent} by
$\rho_G(x)=
\sum_{\sigma\in G}\sigma(x)\sigma^{-1}\in S[G]$,
where $S[G]$ denotes the ordinary group algebra.

\begin{lemma}\label{lem:Mazur-resolvent}
An element $x\in S$ is a $G$-normal basis
generator for $S/R$ if and only if
$\rho_G(x)\in S[G]^\times$.
\end{lemma}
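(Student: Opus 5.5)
We will compare the resolvent with the matrix of the $R$-linear map $\Phi_x:R[G]\to S$, $\sum_\tau r_\tau\tau\mapsto\sum_\tau r_\tau\tau(x)$. By definition, $x$ is a $G$-normal basis generator exactly when $\Phi_x$ is bijective. By Theorem~\ref{thm:normalbasissemilocal}, $S$ is free of rank $|G|$ over $R$. The plan is to base change along $R\to S$ and use the Galois isomorphism $S\otimes_R S\cong\prod_{\sigma\in G}S$ to rewrite $\Phi_x$ as an $S$-linear map whose matrix is, up to relabelling rows and columns, left multiplication by $\rho_G(x)$ on $S[G]$.

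First, we note that $R\to S$ is faithfully flat. Indeed, $S$ is finitely generated projective over $R$ by \cite[Theorem~1.3(c)]{ChaseHarrisonRosenberg}, and it is free of positive rank by Theorem~\ref{thm:normalbasissemilocal}. Hence $\Phi_x$ is bijective if and only if $S\otimes_R\Phi_x:S[G]\to S\otimes_R S$ is bijective. Composing with the canonical isomorphism $S\otimes_R S\to\prod_{\sigma}S$, $a\otimes b\mapsto(a\sigma(b))_\sigma$, the map becomes
\[
\Psi:S[G]\to\prod_{\sigma\in G}S,\qquad \sum_\tau s_\tau\tau\mapsto\Bigl(\sum_\tau s_\tau\,\sigma\tau(x)\Bigr)_{\sigma\in G}.
\]
This is $S$-linear, and its matrix in the standard bases is $M=(\sigma\tau(x))_{\sigma,\tau}$. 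So $x$ is a normal basis generator if and only if $\det M\in S^\times$.

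Next, we compute left multiplication by $\rho_G(x)$ on $S[G]$ in the basis $G$. We have
\[
\rho_G(x)\,\tau=\sum_{\sigma}\sigma(x)\sigma^{-1}\tau,
\]
so the coefficient of $\mu$ is $(\tau\mu^{-1})(x)$. The matrix is therefore $N=((\tau\mu^{-1})(x))_{\mu,\tau}$. Reindexing the rows by $\mu=\sigma^{-1}$ turns $N$ into $M$, so $\det N=\pm\det M$.

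For a finite free $S$-module, left multiplication $L_a$ by $a\in S[G]$ is invertible if and only if $a\in S[G]^\times$, by the argument of Lemma~\ref{lem:units-open}. Note that $S[G]$ is noncommutative but finite free over the commutative ring $S$: if $L_a$ is bijective, pick $b$ with $ab=1$, and then $L_aL_b=\mathrm{id}$ gives $L_bL_a=\mathrm{id}$ by determinants, so $ba=1$. Also, $L_a$ is invertible if and only if $\det L_a\in S^\times$. Combining these gives the equivalence.

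The main obstacle is the faithful-flatness descent step, namely that bijectivity of $\Phi_x$ can be detected after base change to $S$. This needs $S$ to be faithfully flat over $R$. Projectivity gives flatness, and faithfulness follows because $S$ is free of positive rank by Theorem~\ref{thm:normalbasissemilocal}; alternatively, $R$ is a direct summand of $S$ via the trace surjectivity of \cite{ChaseHarrisonRosenberg}. The remaining index bookkeeping between $M$ and $N$ is routine.
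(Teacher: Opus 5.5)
Your proof is correct. It differs from the paper only in that the paper proves nothing here: it records the lemma as a special case of Mazur's Proposition~1 and stops.

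You instead give a self-contained argument in three steps:
\begin{enumerate}
\item Detect bijectivity of $\Phi_x\colon R[G]\to S$ after the faithfully flat base change $R\to S$.
\item Use the Galois isomorphism $S\otimes_R S\cong\prod_{\sigma\in G}S$ to identify $S\otimes_R\Phi_x$ with the $S$-linear map of matrix $M=(\sigma\tau(x))_{\sigma,\tau}$.
\item Compare $\det M$ with the determinant of left multiplication by $\rho_G(x)$ on the free $S$-module $S[G]$.
\end{enumerate}

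There is one bookkeeping slip. Reindexing the rows of $N=((\tau\mu^{-1})(x))_{\mu,\tau}$ by $\mu=\sigma^{-1}$ produces $(\tau\sigma(x))_{\sigma,\tau}$, which is the transpose of $M$, not $M$. For nonabelian $G$, no separate reindexing of rows and columns yields $M$ itself. Since $\det M^{T}=\det M$, your conclusion $\det N=\pm\det M$ still stands.

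Your route buys transparency and generality. The only place you use the standing hypothesis that $R$ is finite-dimensional is the appeal to Theorem~\ref{thm:normalbasissemilocal} for faithful flatness. You also mention an alternative: $S$ is finitely generated projective over $R$, and $R$ is an $R$-direct summand of $S$ via an element of trace one. With that alternative, the same argument proves the resolvent criterion for every $G$-Galois extension of commutative rings. The citation is shorter and places the lemma inside a more general published result, but leaves the reader to look it up.
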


\begin{proof}
This is a special case of
\cite[Proposition~1]{Mazur}.
\end{proof}

Let $H\leq G$. By
\cite[Theorem~2.2]{ChaseHarrisonRosenberg},
$S/S^H$ is an $H$-Galois extension.
If $H\trianglelefteq G$, then $S^H/R$ is a
$(G/H)$-Galois extension.
Let $\mathcal N_G(S/R)$ denote the set of
$G$-normal basis generators for $S/R$, and define
$\Tr_H:S \to S^H$ by
$\Tr_H(x)=\sum_{\sigma\in H}\sigma(x)$.
We use the action of $R[G]$ on $S$, given by
$\left(\sum_{\sigma\in G}r_\sigma\sigma\right)(x)
=\sum_{\sigma\in G}r_\sigma\sigma(x)$.

\begin{proposition}\label{prop:normal-trace-lift}
Let $H\trianglelefteq G$. Then
$\Tr_H(\mathcal N_G(S/R)) = \mathcal N_{G/H}(S^H/R)$.
In particular,
$\Tr_G(\mathcal N_G(S/R))=R^\times$.
\end{proposition}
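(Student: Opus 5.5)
We prove the two inclusions separately, using the resolvent criterion Lemma~\ref{lem:Mazur-resolvent} together with unit lifting (Proposition~\ref{prop:unit-lifting-zariski}). Put $\bar G=G/H$, $T=S^H$, and let $\pi:S[G]\to S[\bar G]$ be the $S$-algebra map induced by $G\to\bar G$; it is surjective.

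\emph{Inclusion $\subseteq$.} Let $x\in\mathcal N_G(S/R)$ and $y=\Tr_H(x)\in T$. For $\bar\tau=\tau H$ we have $\bar\tau(y)=\sum_{\sigma\in\tau H}\sigma(x)$. Grouping the terms of $\pi(\rho_G(x))=\sum_\sigma\sigma(x)\overline{\sigma^{-1}}$ by cosets gives
\[
\pi(\rho_G(x))=\sum_{\bar\tau\in\bar G}\bar\tau(y)\,\bar\tau^{-1}=\rho_{\bar G}(y),
\]
where $\rho_{\bar G}(y)$ is computed in $T[\bar G]\subseteq S[\bar G]$. Since $\rho_G(x)$ is a unit, so is $\rho_{\bar G}(y)$ in $S[\bar G]$. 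We then descend to $T[\bar G]$. The inclusion $T[\bar G]\subseteq S[\bar G]$ is an extension of finite-dimensional $K$-algebras (Proposition~\ref{prop:galois-finite-dimensional}). An element $a$ of a finite-dimensional subalgebra that is invertible in the larger algebra has injective left multiplication on the subalgebra, so it is invertible there. Hence $\rho_{\bar G}(y)\in T[\bar G]^\times$. Applying Lemma~\ref{lem:Mazur-resolvent} to the $\bar G$-Galois extension $T/R$ shows that $y\in\mathcal N_{\bar G}(T/R)$.

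\emph{Inclusion $\supseteq$.} Let $y\in\mathcal N_{\bar G}(T/R)$. By Theorem~\ref{thm:normalbasissemilocal}, choose some $x_0\in\mathcal N_G(S/R)$. By the first part, $y_0=\Tr_H(x_0)$ is also a $\bar G$-normal basis generator. Since both $(\bar\tau(y))$ and $(\bar\tau(y_0))$ are $R$-bases of $T$, there is a unique $R[\bar G]$-module automorphism of $T$ sending $y_0$ to $y$.

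The key step is to identify $\operatorname{End}_{R[\bar G]}(T)$ with $R[\bar G]$ acting via a free rank-one module, with the opposite/inverse twist. Because $T$ is free of rank one over $R[\bar G]$ with generator $y_0$, and $R[\bar G]$ is semilocal Artinian, every automorphism is right multiplication by a unit. This means there exists $\beta\in R[\bar G]^\times$ with $y=\beta y_0$; this follows directly since $y=\sum r_{\bar\tau}\bar\tau(y_0)$ and $y$ generates $T$. To see that $\beta$ is a unit, write also $y_0=\gamma y$ and use freeness to get $\gamma\beta=1$; finite-dimensionality then makes $\beta$ a unit.

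By Proposition~\ref{prop:unit-lifting-zariski} applied to the surjection $R[G]\to R[\bar G]$, lift $\beta$ to $\alpha\in R[G]^\times$ and set $x=\alpha x_0$. Then $x\in\mathcal N_G(S/R)$, because $S$ is free of rank one over $R[G]$ on $x_0$ and $\alpha$ is a unit. Moreover $\Tr_H$ is $R[G]$-linear, and $R[G]$ acts on $T$ through $\bar G$, so
\[
\Tr_H(x)=\alpha\,\Tr_H(x_0)=\beta y_0=y.
\]

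The last claim is the case $H=G$: the quotient is trivial, and $\mathcal N_{\{e\}}(R/R)=R^\times$.

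The main obstacle is in the first inclusion: checking the coset regrouping $\pi(\rho_G(x))=\rho_{\bar G}(\Tr_H x)$ carefully, including the inverses, and the descent of invertibility from $S[\bar G]$ to $T[\bar G]$. The lifting step is routine once Proposition~\ref{prop:unit-lifting-zariski} is available.
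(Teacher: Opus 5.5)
Your proof is correct. The inclusion $\supseteq$ is the paper's argument: write $y=\beta\,\Tr_H(x_0)$ with $\beta\in R[G/H]^\times$, lift $\beta$ to $R[G]^\times$ by Proposition~\ref{prop:unit-lifting-zariski}, and use that $\Tr_H$ is $R[G]$-linear. Your remarks on $\operatorname{End}_{R[G/H]}(S^H)$ and semilocality are not needed; the direct computation $y=\beta y_0$, $y_0=\gamma y$, $\gamma\beta=1$ is all the argument uses.

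For $\subseteq$ you take a genuinely different route. The paper argues combinatorially. $H$ permutes the $R$-basis $(\sigma(x))_{\sigma\in G}$, so the $H$-orbit sums form an $R$-basis of $S^H$. By normality of $H$, these orbit sums are exactly the $G/H$-conjugates of $\Tr_H(x)$. You instead prove the identity $\pi(\rho_G(x))=\rho_{G/H}(\Tr_H(x))$ for the projection $\pi:S[G]\to S[G/H]$. You then push invertibility along $\pi$, descend it from $S[G/H]$ to $S^H[G/H]$, and apply Lemma~\ref{lem:Mazur-resolvent} to $S^H/R$.

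The paper's version is more elementary: it needs neither Mazur's criterion nor finite-dimensionality over $K$. Yours makes explicit that resolvents are compatible with partial traces, which fits the resolvent manipulations in the proof of Theorem~\ref{thm:complete-normal-trace-lift}. Your descent via finite-dimensionality is valid here. It also works without that hypothesis: $H$ acts on $S[G/H]$ through the coefficients with fixed ring $S^H[G/H]$, and the inverse of an $H$-fixed unit is again $H$-fixed.
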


\begin{proof}
Let $s\in\mathcal N_G(S/R)$ and put $t=\Tr_H(s)$.
Since $H$ permutes the $R$-basis
$(\sigma(s))_{\sigma\in G}$, the sums over its orbits
form an $R$-basis of $S^H$.
As $H$ is normal, these sums are precisely the
$G/H$-conjugates of $t$.
Thus $t\in\mathcal N_{G/H}(S^H/R)$, proving one
inclusion.
Conversely, let $x\in\mathcal N_{G/H}(S^H/R)$.
Choose $s\in\mathcal N_G(S/R)$ by
Theorem~\ref{thm:normalbasissemilocal}, and put
$t=\Tr_H(s)$.
Since $x$ and $t$ are both normal basis generators,
there exist $v,w\in R[G/H]$ such that
$x=v(t)$ and $t=w(x)$.
Their normal basis properties give $wv=vw=1$,
so $v\in R[G/H]^\times$.
By Proposition~\ref{prop:unit-lifting-zariski},
the canonical surjection $R[G]\to R[G/H]$
lifts $v$ to some $u\in R[G]^\times$.
Put $y=u(s)$.
The map $R[G] \ni a \mapsto a(y)=(au)(s) \in S$
is an $R$-linear isomorphism, 
being the composite of right
multiplication by $u$ and the normal basis
isomorphism associated with $s$.
Hence $y\in\mathcal N_G(S/R)$.
Since $H$ is normal, the trace is compatible with
the quotient group-algebra action, so 
$\Tr_H(y)=v(\Tr_H(s))=v(t)=x$.
This proves the reverse inclusion.

Taking $H=G$ gives the final assertion, since
$\mathcal N_{\{e\}}(R/R)=R^\times$.
\end{proof}

\begin{lemma}\label{lem:nonemptyzariskinb}
$\mathcal N_G(S/R)$ is a nonempty
Zariski-open subset of the affine $K$-space $S$.
\end{lemma}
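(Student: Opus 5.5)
Nonemptiness is immediate from Theorem~\ref{thm:normalbasissemilocal}, so the work is to show that $\mathcal N_G(S/R)$ is Zariski open in $S$, which is a finite-dimensional affine $K$-space by Proposition~\ref{prop:galois-finite-dimensional}. I will describe $\mathcal N_G(S/R)$ as the preimage of a group of units under a $K$-linear map, using either Lemma~\ref{lem:Mazur-resolvent} or the $R[G]$-action on $S$.

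The cleanest route is via the resolvent. The map $\rho_G:S\to S[G]$, $x\mapsto\sum_{\sigma\in G}\sigma(x)\sigma^{-1}$, is $K$-linear, because each $\sigma$ acts $R$-linearly and hence $K$-linearly. The algebra $S[G]$ is finite-dimensional over $K$, so Lemma~\ref{lem:units-open} shows that $S[G]^\times$ is a basic Zariski-open subset $D(d)$ of $S[G]$. Lemma~\ref{lem:Mazur-resolvent} then gives
\[
\mathcal N_G(S/R)=\rho_G^{-1}\bigl(S[G]^\times\bigr)=D(d\circ\rho_G).
\]
Since $d\circ\rho_G$ is a polynomial in linear coordinates on $S$ (linear maps are Zariski continuous), this set is open, indeed basic open.

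For a self-contained argument avoiding Mazur, fix a normal basis generator $s$. The map $R[G]\to S$, $a\mapsto a(s)$, is then an $R$-linear bijection, so every $x\in S$ can be written uniquely as $x=a_x(s)$, and $x\mapsto a_x$ is $K$-linear. I then claim that $x$ is a normal basis generator exactly when $a_x\in R[G]^\times$. The point is that $\sigma(x)=\sigma a_x(s)$, so $(\sigma(x))_{\sigma\in G}$ is an $R$-basis precisely when right multiplication by $a_x$ is bijective on $R[G]$. In the finite-dimensional algebra $R[G]$, this holds exactly when $a_x$ is a unit, by the argument in Lemma~\ref{lem:units-open} applied to right multiplication. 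Openness then follows as before.

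The only step needing care is checking that the relevant maps are $K$-linear and that the unit criterion holds for right multiplication. Neither should be a real obstacle. Citing Lemma~\ref{lem:Mazur-resolvent} makes the proof two lines.
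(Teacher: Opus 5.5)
Your proposal is correct, and your main route is exactly the paper's proof: nonemptiness comes from Theorem~\ref{thm:normalbasissemilocal}, and openness from $\mathcal N_G(S/R)=\rho_G^{-1}(S[G]^\times)$ via Lemmas~\ref{lem:units-open} and~\ref{lem:Mazur-resolvent}. Your self-contained alternative, which transports the question to $R[G]^\times$ through $x\mapsto a_x$ and uses the right-multiplication unit criterion, is also sound and avoids citing Mazur, but it is not needed.
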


\begin{proof}
Nonemptiness follows from
Theorem~\ref{thm:normalbasissemilocal}.
Since $S[G]$ is finite-dimensional over $K$,
Lemma~\ref{lem:units-open} shows that $S[G]^\times$
is Zariski open.
The map $\rho_G:S\to S[G]$ is $K$-linear and
hence Zariski continuous. Thus
Lemma~\ref{lem:Mazur-resolvent} gives
$\mathcal N_G(S/R)=\rho_G^{-1}(S[G]^\times)$,
which is Zariski open in $S$.
\end{proof}

An element $x\in S$ is called a 
$G$-completely normal basis generator
for $S/R$ if $x \in \mathcal{N}_H(S/S^H)$
for every $H \leq G$. In that case we say
that $(\sigma(x))_{\sigma \in G}$ is
a $G$-completely normal basis for 
$S/R$. We put
$\mathcal{CN}_G(S/R) = \bigcap_{H \leq G}
\mathcal{N}_H(S/S^H)$.

\begin{theorem}\label{thm:completenormalbasis}
Let $K$ be infinite.
Then $\mathcal{CN}_G(S/R)$ is a nonempty
Zariski-open subset of the affine $K$-space $S$.
In particular, $S/R$ has a $G$-completely normal basis.
\end{theorem}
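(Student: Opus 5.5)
Since $G$ is finite, it has only finitely many subgroups, and $\mathcal{CN}_G(S/R)=\bigcap_{H\leq G}\mathcal N_H(S/S^H)$ is a finite intersection. The plan is to show that each set $\mathcal N_H(S/S^H)$ is a nonempty Zariski-open subset of the affine $K$-space $S$. Because $K$ is infinite, any two nonempty Zariski-open subsets of a finite-dimensional affine $K$-space meet, as recalled before Lemma~\ref{lem:affine-open}. Induction on the number of sets then shows that a finite intersection of nonempty Zariski-open sets is again nonempty and open. Alternatively, one can invoke Lemma~\ref{lem:small-open-intersection} with a finite index set, which has cardinality less than $|K|$. Nonemptiness of $\mathcal{CN}_G(S/R)$ gives an element $x$, and $(\sigma(x))_{\sigma\in G}$ is then the desired $G$-completely normal basis.

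For a fixed $H\leq G$, we know from \cite[Theorem~2.2]{ChaseHarrisonRosenberg} that $S/S^H$ is $H$-Galois. To apply Lemma~\ref{lem:nonemptyzariskinb} to this extension, we need $S^H$ to be finite-dimensional over $K$. This holds because $S$ is finite-dimensional by Proposition~\ref{prop:galois-finite-dimensional}, and $S^H$ is a $K$-subspace of $S$. Hence all standing hypotheses of the section hold for $S/S^H$ with group $H$. Lemma~\ref{lem:nonemptyzariskinb} then shows that $\mathcal N_H(S/S^H)$ is nonempty and Zariski open. The relevant affine $K$-space is the same $S$ with the same Zariski topology, so all these open sets live in one space and may be intersected.

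The main point to check is that the resolvent criterion behind Lemma~\ref{lem:nonemptyzariskinb} really works over the base $S^H$ in place of $R$. Concretely, we must confirm that $\rho_H:S\to S[H]$ is still $K$-linear and that $S[H]$ is a finite-dimensional $K$-algebra. Both are immediate, so there is no real obstacle. The one place where infiniteness of $K$ is essential is the nonemptiness of the intersection: over a finite field, distinct nonempty open sets may be disjoint, which is why the finite-field case needs separate treatment.
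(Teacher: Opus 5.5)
Your proposal is correct and is essentially the paper's proof: apply Lemma~\ref{lem:nonemptyzariskinb} to each $H$-Galois extension $S/S^H$ (using that $S^H$ is a finite-dimensional $K$-subspace of $S$) to get nonempty Zariski-open subsets of $S$. Then intersect these finitely many sets, which is possible because nonempty Zariski-open sets are dense when $K$ is infinite.
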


\begin{proof}
For every subgroup $H\leq G$, the algebra $S^H$
is finite-dimensional over $K$, since it is a
$K$-subspace of $S$.
Lemma~\ref{lem:nonemptyzariskinb}, applied to the
$H$-Galois extension $S/S^H$, therefore shows that
$\mathcal{N}_H(S/S^H)$ is nonempty and Zariski open
in $S$.
Since $K$ is infinite, these subsets are dense.
As $G$ has only finitely many subgroups, their
intersection $\mathcal{CN}_G(S/R)$ is nonempty
and Zariski open.
\end{proof}

\begin{example}\label{ex:finite-field-no-complete-normal-basis}
The hypothesis that $K$ is infinite in
Theorem~\ref{thm:completenormalbasis} cannot be omitted.
Let $K=\mathbb F_3$, put $S=K\times K$, and let
$G=C_2=\{e,\tau\}$ act by $\tau(a,b)=(b,a)$.
Identifying $K$ with the diagonal subalgebra of $S$,
the extension $S/K$ is $G$-Galois.
If $x=(a,b)$ were a $G$-completely normal basis
generator, the condition for the trivial subgroup
would imply $x\in S^\times$, so $a,b\neq0$.
But then $\left|\begin{smallmatrix}a&b\\ b&a\end{smallmatrix}\right|
=a^2-b^2 = 1-1 = 0$,
so $(x,\tau(x))$ is not a $K$-basis of $S$.
Thus $\mathcal{CN}_G(S/K)=\varnothing$.
\end{example}

\begin{lemma}\label{lem:trace-preserves-complete-normality}
Let $H\trianglelefteq G$. Then
$\Tr_H(\mathcal{CN}_G(S/R))
\subseteq \mathcal{CN}_{G/H}(S^H/R)$.
\end{lemma}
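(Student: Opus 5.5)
We need to show: if $x\in\mathcal{CN}_G(S/R)$ and $H\trianglelefteq G$, then $t=\Tr_H(x)$ is a $G/H$-completely normal basis generator for $S^H/R$. That means that for every subgroup $\bar M\leq G/H$ we need
$t\in\mathcal N_{\bar M}(S^H/(S^H)^{\bar M})$.

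The plan is to reduce each such condition to Proposition~\ref{prop:normal-trace-lift}, applied to a suitable intermediate Galois extension. Every subgroup of $G/H$ has the form $\bar M=M/H$ for a unique subgroup $M$ with $H\leq M\leq G$, by the correspondence theorem. Since $H\trianglelefteq G$ and $H\subseteq M$, we also have $H\trianglelefteq M$.

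\begin{enumerate}
\item By \cite[Theorem~2.2]{ChaseHarrisonRosenberg}, the extension $S/S^M$ is $M$-Galois. The fixed ring $S^M$ is finite-dimensional over $K$, being a $K$-subspace of $S$, so the standing assumptions of this section hold for $S/S^M$.
\item Because $x\in\mathcal{CN}_G(S/R)$, we have $x\in\mathcal N_M(S/S^M)$.
\item Proposition~\ref{prop:normal-trace-lift}, applied to the $M$-Galois extension $S/S^M$ with the normal subgroup $H\trianglelefteq M$, gives
\[
\Tr_H(x)\in\mathcal N_{M/H}\bigl(S^H/S^M\bigr).
\]
\end{enumerate}

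It remains to identify this with the condition required for $\bar M$. The action of $G/H$ on $S^H$ is induced by that of $G$, and $M/H$ acts through $M$. Hence $(S^H)^{M/H}=S^M$, and the $M/H$-conjugates of $t$ in the quotient action are exactly the elements $\sigma(t)$ with $\sigma\in M$, taken modulo $H$. So $\mathcal N_{M/H}(S^H/S^M)$ is precisely the set $\mathcal N_{\bar M}(S^H/(S^H)^{\bar M})$ appearing in the definition of $\mathcal{CN}_{G/H}(S^H/R)$, for the $(G/H)$-Galois extension $S^H/R$. Taking the intersection over all $\bar M\leq G/H$ yields $t\in\mathcal{CN}_{G/H}(S^H/R)$.

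I expect no real obstacle; the whole argument is bookkeeping. The one point needing care is in step 3: we need only the inclusion $\Tr_H(\mathcal N_M(S/S^M))\subseteq\mathcal N_{M/H}(S^H/S^M)$ from Proposition~\ref{prop:normal-trace-lift}. That inclusion uses only the orbit-sum argument and the normality of $H$ in $M$, not any lifting of units. The other point to check is that the $M/H$-Galois structure on $S^H/S^M$ obtained from \cite{ChaseHarrisonRosenberg} is the same as the restriction of the $G/H$-action, which is immediate.
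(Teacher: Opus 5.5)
Your proposal is correct and follows essentially the same route as the paper. For each $H\leq M\leq G$, the paper applies Proposition~\ref{prop:normal-trace-lift} to $S/S^M$ with $H\trianglelefteq M$, and then uses $(S^H)^{M/H}=S^M$ together with the correspondence between subgroups of $G/H$ and subgroups containing $H$.
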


\begin{proof}
Let $x\in\mathcal{CN}_G(S/R)$ and put $y=\Tr_H(x)$.
For every subgroup $V$ with $H\leq V\leq G$, we have
$x\in\mathcal{N}_V(S/S^V)$.
Since $H\trianglelefteq V$,
Proposition~\ref{prop:normal-trace-lift}, applied to
$S/S^V$, gives $y\in\mathcal{N}_{V/H}(S^H/S^V)$.
Every subgroup of $G/H$ has the form $V/H$, and
$(S^H)^{V/H}=S^V$.
Thus $y\in\mathcal{CN}_{G/H}(S^H/R)$.
\end{proof}

\begin{theorem}\label{thm:complete-normal-trace-lift}
If $K$ is infinite and $H\trianglelefteq G$,
then $\Tr_H(\mathcal{CN}_G(S/R)) 
=\mathcal{CN}_{G/H}(S^H/R)$.
\end{theorem}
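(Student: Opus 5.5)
The inclusion $\subseteq$ is Lemma~\ref{lem:trace-preserves-complete-normality}, so only the reverse inclusion needs proof. Fix $y\in\mathcal{CN}_{G/H}(S^H/R)$ and consider the fibre
\[
F_y=\Tr_H^{-1}(\{y\})\subseteq S .
\]
This is an affine subspace of $S$, because $\Tr_H:S\to S^H$ is $K$-linear. It is nonempty, since $\Tr_H$ is surjective: by Proposition~\ref{prop:normal-trace-lift} every normal basis generator of $S^H/R$ is a trace. I want to show that $F_y\cap\mathcal{CN}_G(S/R)\neq\varnothing$. Since $\mathcal{CN}_G(S/R)=\bigcap_{V\leq G}\mathcal N_V(S/S^V)$ is a finite intersection and each $\mathcal N_V(S/S^V)$ is Zariski open by Lemma~\ref{lem:nonemptyzariskinb}, each $F_y\cap\mathcal N_V(S/S^V)$ is relatively Zariski open in $F_y$. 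Because $K$ is infinite, finitely many nonempty relatively open subsets of the affine space $F_y$ have nonempty intersection. Hence it suffices to show that $F_y\cap\mathcal N_V(S/S^V)\neq\varnothing$ for each subgroup $V\leq G$ separately.

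Fix $V\leq G$. I treat the subgroup $V\cap H$, which is normal in $V$. The extension $S/S^V$ is $V$-Galois. Applying Proposition~\ref{prop:normal-trace-lift} to $S/S^V$ with the normal subgroup $V\cap H$ gives
\[
\Tr_{V\cap H}\bigl(\mathcal N_V(S/S^V)\bigr)=\mathcal N_{V/(V\cap H)}\bigl(S^{V\cap H}/S^V\bigr).
\]
Since $H$ is normal in $G$, we can write $\Tr_H=\Tr_{H/(V\cap H)}\circ\Tr_{V\cap H}$, where $\Tr_{H/(V\cap H)}$ is computed with coset representatives of $H/(V\cap H)$. Here $S^{V\cap H}/S^H$ is an $H/(V\cap H)$-Galois extension only when $V\cap H$ is normal in $H$. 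To avoid that issue, I work directly in the subextension $S/S^{VH}$, where $VH$ is a subgroup because $H$ is normal. We have $y\in\mathcal N_{VH/H}(S^H/S^{VH})$. Via $VH/H\cong V/(V\cap H)$, restriction gives an isomorphism $S^{V\cap H}\to S^H$ after applying $\Tr$. So I lift in two stages.

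\emph{First stage.} Pick $x'\in S^{V\cap H}$ with $\Tr_{H/(V\cap H)}(x')=y$ and $x'\in\mathcal N_{V/(V\cap H)}(S^{V\cap H}/S^V)$.

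\emph{Second stage.} Lift $x'$ by Proposition~\ref{prop:normal-trace-lift} to some $x\in\mathcal N_V(S/S^V)$ with $\Tr_{V\cap H}(x)=x'$. Then $\Tr_H(x)=y$.

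The first stage is the main obstacle. My first attempt is to use the same unit-lifting trick as in the proof of Proposition~\ref{prop:normal-trace-lift}. Start from any $x_0\in\mathcal N_V(S/S^V)$ with $\Tr_H(x_0)$ normal for $S^H/S^{VH}$. Then $y=v(\Tr_H x_0)$ for some unit $v\in S^{VH}[VH/H]^\times$. Lift $v$ along $S^{VH}[V]\to S^{VH}[V/(V\cap H)]\cong S^{VH}[VH/H]$ to a unit using Proposition~\ref{prop:unit-lifting-zariski}.

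If these algebra-compatibility issues become awkward, the fallback is a dimension-free density argument. Choose a basis and observe that
\[
\Tr_H\bigl(\mathcal N_V(S/S^V)\bigr)\supseteq\Tr_H\bigl(\mathcal{CN}_G(S/R)\bigr),
\]
where the right-hand side is a nonempty Zariski-open subset of $S^H$ by Lemma~\ref{lem:affine-open} and Theorem~\ref{thm:completenormalbasis}. The fibrewise statement must then be upgraded via the group action: for $u\in S^{VH}[VH]^\times$, replacing $x$ by $u(x)$ moves within $\mathcal N_V$ while acting transitively on traces.
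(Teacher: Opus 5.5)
\textbf{Gap at the crux.} Your reduction matches the paper's: the fibre $F_y$, relative Zariski openness, and nonemptiness of $F_y\cap\mathcal N_V(S/S^V)$ for each $V$ separately. Your two-stage plan also matches it. With $s\in\mathcal{CN}_G(S/R)$ fixed, the paper first shows $y\in\mathcal N_{V/(V\cap H)}(S^{V\cap H}/S^V)$ by pushing the resolvent $\rho_{VH/H}(y)$ into $S^{V\cap H}[V/(V\cap H)]$. It then takes $x'=c\,y$ with $c=\Tr_{VH}(s)^{-1}\Tr_V(s)\in(S^V)^\times$; the scalar $c$ is what makes the sum of the conjugates of $x'$ over $H/(V\cap H)$ equal to $y$. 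The paper's second stage is exactly yours.

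The gap is that you never carry out the first stage, which is the substantive step. You sketch an attempt, leave its verification open, and offer a fallback. Discard the fallback, because units of $S^{VH}[VH]$ do not preserve $\mathcal N_V(S/S^V)$. For $\eta\in VH\setminus V$, $\eta(x)$ is $V$-normal if and only if $x\in\mathcal N_{\eta^{-1}V\eta}(S/S^{\eta^{-1}V\eta})$, which is in general a different condition. Only units of $S^V[V]$ are guaranteed to preserve $V$-normality.

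\textbf{Your first attempt closes the gap.} It even makes both stages unnecessary.
Take $x_0=s$. Then $s\in\mathcal N_V(S/S^V)$. Since $s\in\mathcal N_{VH}(S/S^{VH})$ and $H\trianglelefteq VH$, Proposition~\ref{prop:normal-trace-lift} gives $t:=\Tr_H(s)\in\mathcal N_{VH/H}(S^H/S^{VH})$. The element $y$ also lies in this set, because $(S^H)^{VH/H}=S^{VH}$. Hence $y=v(t)$ for some $v\in S^{VH}[VH/H]^\times$.

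Apply Proposition~\ref{prop:unit-lifting-zariski} to the surjection $S^{VH}[V]\to S^{VH}[VH/H]$, $\sigma\mapsto\sigma H$. This lifts $v$ to $u=\sum_{\sigma\in V}u_\sigma\sigma\in S^{VH}[V]^\times$; put $x=u(s)$. Since $u\in S^V[V]^\times$, the map $a\mapsto a(x)=(au)(s)$ on $S^V[V]$ is bijective, so $x\in\mathcal N_V(S/S^V)$. Each $u_\sigma$ is fixed by $H$, and $\sum_{h\in H}h\sigma=\sigma\sum_{h\in H}h$ by normality, so
\[
\Tr_H(x)=\sum_{\sigma\in V}u_\sigma\,\sigma(t)=v(t)=y .
\]
Thus $x\in F_y\cap\mathcal N_V(S/S^V)$.

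\textbf{Comparison with the paper.} This route replaces the resolvent transfer and the correcting scalar $c$ by a single unit lift. It is the proof of Proposition~\ref{prop:normal-trace-lift} rerun with coefficients in $S^{VH}=S^V\cap S^H$. These coefficients are fixed by $V$, so the unit preserves $V$-normality, and by $H$, so the unit commutes with $\Tr_H$. Both routes need $\mathcal{CN}_G(S/R)\neq\varnothing$, hence that $K$ is infinite.
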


\begin{proof}
By Lemma~\ref{lem:trace-preserves-complete-normality},
it suffices to prove the reverse inclusion.
Fix $x\in\mathcal{CN}_{G/H}(S^H/R)$ and put
$\mathcal T_x=\{y\in S:\Tr_H(y)=x\}$.
By Proposition~\ref{prop:normal-trace-lift},
$\mathcal T_x$ is nonempty. Since $\Tr_H$ is
$K$-linear, $\mathcal T_x$ is a finite-dimensional
affine $K$-space.
For every subgroup $U\leq G$, put
$\mathcal O_U
=\mathcal T_x\cap\mathcal N_U(S/S^U)$.
By Lemma~\ref{lem:nonemptyzariskinb},
$\mathcal O_U$ is Zariski open in $\mathcal T_x$.
We show that each $\mathcal O_U$ is nonempty.

Choose $s\in\mathcal{CN}_G(S/R)$ by
Theorem~\ref{thm:completenormalbasis}.
Fix $U\leq G$, and put $I=U\cap H$ and $J=UH$.
Then $I\trianglelefteq U$, and the map
$uI\mapsto uH$ identifies $U/I$ with $J/H$.

Since $x$ is completely normal for $S^H/R$,
it is a $(J/H)$-normal basis generator for
$S^H/S^J$. By Lemma~\ref{lem:Mazur-resolvent},
$\rho_{J/H}(x)$ is a unit of $S^H[J/H]$.
The inclusion $S^H\subseteq S^I$ and the preceding
group isomorphism induce a homomorphism
$S^H[J/H] \to S^I[U/I]$
carrying $\rho_{J/H}(x)$ to $\rho_{U/I}(x)$.
By Lemma~\ref{lem:Mazur-resolvent} again, we get
$x\in\mathcal N_{U/I}(S^I/S^U)$.
Since $s$ is completely normal,
Proposition~\ref{prop:normal-trace-lift} gives
$\Tr_U(s)\in(S^U)^\times$ and 
$\Tr_J(s)\in(S^J)^\times$. 

Consequently,
$c:=\Tr_J(s)^{-1}\Tr_U(s)\in(S^U)^\times$,
and $cx$ is also a $(U/I)$-normal basis generator
for $S^I/S^U$.
Applying Proposition~\ref{prop:normal-trace-lift}
to $S/S^U$, we obtain
$y_U\in\mathcal N_U(S/S^U)$ with
$\Tr_I(y_U)=cx$.

Let $T\subseteq H$ be a set of representatives
for the left cosets of $I$ in $H$.
Since $J=HU$ and $I=H\cap U$, the same set
represents the left cosets of $U$ in $J$.
Both $x$ and $\Tr_J(s)$ are fixed by $H$, so
we get that
$
\Tr_H(y_U)
 =\sum_{\tau\in T}\tau(\Tr_I(y_U)) 
 =\sum_{\tau\in T}\tau(cx)
 =x\,\Tr_J(s)^{-1}
       \sum_{\tau\in T}\tau(\Tr_U(s))
 =x\,\Tr_J(s)^{-1}\Tr_J(s)=x.
$
Therefore $y_U\in\mathcal O_U$, proving non\-emptiness.

Since $G$ has only finitely many subgroups and
$K$ is infinite, the nonempty open subsets
$\mathcal O_U$ of $\mathcal T_x$ have nonempty
intersection.
Any element of this intersection belongs to
$\mathcal{CN}_G(S/R)$ and has $H$-trace equal to $x$.
This proves the reverse inclusion.
\end{proof}

\section{Profinite Galois ring extensions}
\label{sec:profinitegaloisring}

Throughout this section, $R$ and $S$ denote commutative
$K$-algebras, with $R$ finite-dimensional over $K$,
and $G$ denotes a profinite group.
Let $\mathcal U$ be the directed set of open normal
subgroups of $G$, ordered by reverse inclusion.

We call $S/R$ a \emph{profinite $G$-Galois extension}
if $S$, equipped with the discrete topology, has a
continuous action of $G$ by $R$-algebra automorphisms
such that $S^G=R$ and, for every $N\in\mathcal U$,
the extension $S^N/R$ is $(G/N)$-Galois under the
induced action.
Throughout the rest of this section, assume that
$S/R$ is profinite $G$-Galois.
Continuity implies
$S=\bigcup_{N\in\mathcal U}S^N$,
so these finite Galois levels exhaust $S$.

The reformulation of complete normality in
Section~\ref{sec:introduction} extends to this setting.
Let $C(G,R)$ denote the $R$-module of continuous
functions $G\to R$, where $R$ is discrete, with
the translation action
$(\sigma f)(\tau)=f(\sigma^{-1}\tau)$, for
$\sigma,\tau\in G$.
Let $\varphi:C(G,R)\to S$ be a $G$-equivariant
$R$-linear map, and let $H\leq G$ be open.
Since $H$ is also closed, extension by zero defines
an $H$-equivariant map
$i_H:C(H,R)\to C(G,R)$.
With $S^H$ given the discrete topology, there is
a canonical identification
\[
S^H\otimes_R C(H,R)\cong C(H,S^H),
\qquad
a\otimes f \mapsto \bigl(h\mapsto af(h)\bigr).
\]
This follows by expressing continuous functions
using finite clopen partitions of $H$.
Thus $\varphi$ induces an $H$-equivariant
$S^H$-linear map
\[
\varphi^H:C(H,S^H)\longrightarrow S,
\qquad
\varphi^H(a\otimes f)=a\varphi(i_H(f)).
\]
The natural profinite analogue of complete normality
is therefore the existence of a $G$-equivariant
$R$-linear isomorphism $\varphi:C(G,R)\to S$
such that $\varphi^H$ is an isomorphism for every
open subgroup $H\leq G$.

We next express this condition in terms of completely
normal generators at the finite Galois levels.
For $M,N\in\mathcal U$ with $M\subseteq N$,
the extension $S^M/S^N$ is $(N/M)$-Galois.
We write $\Tr_{S^M/S^N}(x)
=\sum_{\tau M\in N/M}\tau(x)$, for $x\in S^M$.
A family $(x_N)_{N\in\mathcal U}$, with $x_N\in S^N$,
is called \emph{trace coherent} if
$\Tr_{S^M/S^N}(x_M)=x_N$
for $M,N\in\mathcal U$ and $M\subseteq N$.
For a finite set $E$ and a commutative ring $A$,
we write $(E,A)$ for the $A$-module of all functions
$E\to A$.

\begin{proposition}\label{prop:coherent-Lenstra-galois}
There is a natural bijection between trace-coherent
families $(x_N)_{N\in\mathcal U}$ satisfying
$x_N\in\mathcal{CN}_{G/N}(S^N/R)$, for 
$N\in\mathcal U$,
and $G$-equivariant $R$-linear isomorphisms
$\varphi:C(G,R)\to S$ such that $\varphi^H$ is an
isomorphism for every open subgroup $H\leq G$.
The correspondence is characterized by
$x_N=\varphi(1_N)$, where $1_N$ is the
characteristic function of $N$.
\end{proposition}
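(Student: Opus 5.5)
The plan is to build the correspondence level by level and reduce both the isomorphism and the complete-normality conditions to finite statements, one open normal subgroup at a time.

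\textbf{Reduction to finite levels.} Since $G$ is profinite, every continuous $f:G\to R$ factors through some $G/N$ with $N\in\mathcal U$. Hence $C(G,R)=\bigcup_{N}C(G,R)^N$, and $C(G,R)^N$ is identified with $(G/N,R)$, which has $R$-basis the characteristic functions $1_{\sigma N}=\sigma 1_N$. For a $G$-equivariant $R$-linear $\varphi$, put $x_N=\varphi(1_N)$. Because $1_N$ is $N$-invariant, $x_N\in S^N$. The identity $1_N=\sum_{\tau M\in N/M}\tau 1_M$ gives trace coherence $\Tr_{S^M/S^N}(x_M)=x_N$. Conversely, a trace-coherent family defines $\varphi_N:(G/N,R)\to S^N$ by $1_{\sigma N}\mapsto\sigma(x_N)$. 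Coherence makes these maps compatible with the inclusions $(G/N,R)\subseteq(G/M,R)$, so they glue to a $G$-equivariant $\varphi$ on the union. The two constructions are mutually inverse, since $\varphi$ is determined by its values on the $\sigma 1_N$, which span $C(G,R)$. This settles the bijection between trace-coherent families and $G$-equivariant $R$-linear maps.

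\textbf{Translating the conditions.} Next I show that $\varphi$ is an isomorphism with every $\varphi^H$ an isomorphism if and only if each $x_N\in\mathcal{CN}_{G/N}(S^N/R)$. Fix an open subgroup $H$. For $N\in\mathcal U$ with $N\subseteq H$, the continuous functions $H\to S^H$ that are right $N$-invariant form $(H/N,S^H)$, with $S^H$-basis $1_{hN}$ for $hN\in H/N$. The map $\varphi^H$ sends $1_{hN}$ to $h(x_N)$, and it maps this submodule into $S^N$. Thus $\varphi^H$ restricted to level $N$ is exactly the map whose bijectivity says that $x_N$ is an $(H/N)$-normal basis generator of $S^N/S^H$. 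Here $S^N/S^H$ is $H/N$-Galois by \cite[Theorem~2.2]{ChaseHarrisonRosenberg}.

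Both sides are filtered unions over $N\subseteq H$, with $S=\bigcup_N S^N$. Moreover, $\varphi^H$ maps level $N$ into level $N$, and a vector in level $N$ whose image lies in $S^N$ stays at level $N$. Therefore $\varphi^H$ is bijective if and only if every level map is bijective. Injectivity of the union follows from injectivity at each level. Surjectivity requires that every element of $S^N$ be hit from level $N$; given level-wise bijectivity this holds automatically, and conversely global bijectivity forces level-wise bijectivity because $\varphi^H$ preserves the $N$-invariants on both sides. I expect this invariants argument to be the main subtlety. The case $H=G$ is the isomorphism condition on $\varphi$ itself, since $\varphi^G=\varphi$.

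\textbf{Ranging over all subgroups.} Every subgroup of $G/N$ has the form $H/N$ with $H$ open and $N\subseteq H$. Hence requiring the level-$N$ maps to be bijective for all such $H$ is exactly $x_N\in\mathcal{CN}_{G/N}(S^N/R)$. For a fixed open $H$, the levels $N\subseteq H$ are cofinal in $\mathcal U$. The remaining levels follow from the normal-subgroup case: if $x_M$ is completely normal with $M\subseteq N$, then $x_N=\Tr(x_M)$ is completely normal by Lemma~\ref{lem:trace-preserves-complete-normality}. Even without that lemma, the condition ``for all $N\subseteq H$'' suffices for $\varphi^H$, and the family condition ``for all $N$'' follows from $\varphi^H$ being an isomorphism at the finite levels. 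Naturality is then clear from the construction.
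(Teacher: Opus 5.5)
Your proposal is correct and follows essentially the same route as the paper: glue the level maps $\varphi_N$ via trace coherence, then identify the restriction of $\varphi^H$ to $N$-fixed points with $(H/N,S^H)\to S^N$, $1_{hN}\mapsto h(x_N)$, and use that every subgroup of $G/N$ has the form $H/N$. The differences are cosmetic. You first set up the bijection for arbitrary equivariant maps and recover the isomorphism condition as the case $H=G$ via $\varphi^G=\varphi$. Your appeal to Lemma~\ref{lem:trace-preserves-complete-normality} for ``remaining levels'' is unnecessary, as you note yourself.
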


\begin{proof}
We first establish the correspondence for ordinary
normal basis generators.
Identify $(G/N,R)$ with $C(G,R)^N$ by inflation.
Given a trace-coherent family $(x_N)_{N\in\mathcal U}$
of normal basis generators, define
$\varphi_N : (G/N,R) \to  S^N$ by
$\varphi_N(f)
=\sum_{\sigma N\in G/N}f(\sigma N)\sigma(x_N)$.
Each $\varphi_N$ is a $(G/N)$-equivariant
$R$-linear isomorphism.
For $M\subseteq N$, grouping the terms in
$\varphi_M(f)$ according to the cosets of $N$
and using trace coherence gives
$\varphi_M(f)=\varphi_N(f)$ for
$f \in (G/N,R)$.
Since $C(G,R)$ and $S$ are the unions of these
finite-level function modules and fixed rings,
respectively, the maps $\varphi_N$ induce a
$G$-equivariant $R$-linear isomorphism
$\varphi:C(G,R)\to S$.

Conversely, let $\varphi:C(G,R)\to S$ be such an
isomorphism and put $x_N=\varphi(1_N)$.
Taking $N$-fixed points gives a $(G/N)$-equivariant
$R$-linear isomorphism
$(G/N,R) \to S^N$,
which sends the characteristic-function basis to
$(\sigma(x_N))_{\sigma N\in G/N}$.
Thus $x_N$ is a normal basis generator.
For $M\subseteq N$, the identity
$1_N=\sum_{\tau M\in N/M}\tau 1_M$
and equivariance give
$x_N=\sum_{\tau M\in N/M}\tau(x_M)
=\Tr_{S^M/S^N}(x_M)$.
Hence the family is trace coherent.
These constructions are inverse to each other.

It remains to characterize complete normality.
Let $H\leq G$ be open and let $N\in\mathcal U$
satisfy $N\subseteq H$.
The restriction of $\varphi^H$ to $N$-fixed points is
$(H/N,S^H) \to S^N$, where
$f \mapsto \sum_{hN\in H/N}f(hN)h(x_N)$.
Indeed, $\varphi^H$ sends the characteristic function
of $hN$ to $h(x_N)$.
This restriction is an isomorphism precisely when
$x_N$ is an $(H/N)$-normal basis generator for
$S^N/S^H$.

If every $x_N$ is completely normal, all these
finite-level restrictions are isomorphisms.
As $N$ ranges over the open normal subgroups of $G$
contained in $H$, their domains and codomains
exhaust $C(H,S^H)$ and $S$, respectively.
Thus $\varphi^H$ is an isomorphism.

Conversely, if every $\varphi^H$ is an isomorphism,
so is its restriction to $N$-fixed points.
Every subgroup of $G/N$ has the form $H/N$ for an
open subgroup $H$ containing $N$, and
$(S^N)^{H/N}=S^H$.
Consequently, each $x_N$ is completely normal.
\end{proof}

\begin{remark}\label{rem:profiniteLenstra}
The same construction gives a natural bijection
between trace-coherent families of ordinary normal
basis generators at the finite Galois levels and
$G$-equivariant $R$-linear isomorphisms
$C(G,R)\to S$.
For Galois field extensions, these are the
isomorphisms appearing in Lenstra's infinite
normal basis theorem.
\end{remark}

\begin{theorem}\label{thm:profinite-galois}
Suppose that $K$ is infinite and
$|\mathcal U|\leq |K|$.
Let $\mathcal J\subseteq\mathcal U$ be directed
with $|\mathcal J|<|K|$.
Then every trace-coherent family
$(x_N)_{N\in\mathcal J}$ with
$x_N\in\mathcal{CN}_{G/N}(S^N/R)$, for $N\in\mathcal J$,
extends to a trace-coherent family of completely
normal basis generators indexed by all of
$\mathcal U$.
\end{theorem}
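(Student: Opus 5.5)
The plan is to apply Proposition~\ref{prop:relative-affine-open-limit} to the inverse system indexed by $\mathcal U$ given by
\[
V_N=S^N,\qquad U_N=\mathcal{CN}_{G/N}(S^N/R),\qquad p_{MN}=\Tr_{S^M/S^N}:S^M\to S^N \quad (M\subseteq N).
\]
An element of $\varprojlim_{N\in\mathcal U}U_N$ is exactly a trace-coherent family of completely normal basis generators. The restriction-surjectivity statement of Proposition~\ref{prop:relative-affine-open-limit}, applied to the directed subset $\mathcal J$, then gives precisely the required extension. The work consists in checking the hypotheses.

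\emph{The spaces and maps form an inverse system.} Each $S^N$ is finite-dimensional over $K$: since $S^N/R$ is $(G/N)$-Galois, Proposition~\ref{prop:galois-finite-dimensional} applies. The trace maps $p_{MN}$ are $K$-linear, and $p_{NN}=\operatorname{id}$. Transitivity $p_{LN}=p_{MN}\circ p_{LM}$ for $L\subseteq M\subseteq N$ holds because a set of coset representatives for $N/L$ is obtained as products of representatives of $N/M$ and $M/L$.

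\emph{The transition maps are surjective.} The extension $S^M/S^N$ is $(N/M)$-Galois, so Proposition~\ref{prop:normal-trace-lift} gives $\Tr_{S^M/S^N}(\mathcal N)=(S^N)^\times$. Hence the image of $p_{MN}$ contains $1$. Since $p_{MN}$ is $S^N$-linear, it is onto $S^N$.

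\emph{The sets $U_N$ are nonempty Zariski-open and compatible with the transition maps.} By Theorem~\ref{thm:completenormalbasis}, applied to the $(G/N)$-Galois extension $S^N/R$ with $K$ infinite, each $U_N$ is a nonempty Zariski-open subset of $S^N$. For the compatibility $p_{MN}(U_M)=U_N$, I apply Theorem~\ref{thm:complete-normal-trace-lift} to the finite group $G/M$ acting on $S^M$, with normal subgroup $N/M$. Since $(S^M)^{N/M}=S^N$ and $(G/M)/(N/M)\cong G/N$, this yields
\[
\Tr_{N/M}\bigl(\mathcal{CN}_{G/M}(S^M/R)\bigr)=\mathcal{CN}_{G/N}(S^N/R),
\]
where $\Tr_{N/M}$ is exactly $\Tr_{S^M/S^N}$.

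\emph{The cardinality hypotheses hold.} We have $|\mathcal U|\le|K|$ and $|\mathcal J|<|K|$ by assumption. The given family on $\mathcal J$ is an element of $\varprojlim_{N\in\mathcal J}U_N$, and Proposition~\ref{prop:relative-affine-open-limit} supplies the lift.

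The only substantive step is the equality $p_{MN}(U_M)=U_N$. That is where the infinitude of $K$ enters, through Theorem~\ref{thm:complete-normal-trace-lift}, and it needs a careful identification of the relevant Galois groups and fixed rings. The remaining points are routine verifications.
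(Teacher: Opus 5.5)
Your proposal is correct and follows essentially the paper's proof: the paper feeds the same inverse system $\bigl(S^N,\ \mathcal{CN}_{G/N}(S^N/R),\ \Tr_{S^M/S^N}\bigr)$ into Proposition~\ref{prop:relative-affine-open-limit}, with Theorem~\ref{thm:complete-normal-trace-lift} supplying $p_{MN}(U_M)=U_N$. The only minor difference is how surjectivity of $\Tr_{S^M/S^N}$ is shown: you use $S^N$-linearity together with a unit in the image (via Proposition~\ref{prop:normal-trace-lift}), while the paper notes that the linear image is Zariski closed and contains the dense set $U_N$; both arguments are valid.
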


\begin{proof}
For $N\in\mathcal U$, put $V_N=S^N$ and
$U_N=\mathcal{CN}_{G/N}(S^N/R)$.
Each $V_N$ is finite-dimensional over $K$, and
Theorem~\ref{thm:completenormalbasis} shows that
$U_N$ is a nonempty Zariski-open subset of $V_N$.
For $M\subseteq N$, put
$p_{MN}=\Tr_{S^M/S^N}$.
These maps are $K$-linear and form an inverse
system by transitivity of trace.
Applying Theorem~\ref{thm:complete-normal-trace-lift}
to the $(G/M)$-Galois extension $S^M/R$ and the
normal subgroup $N/M$ gives $p_{MN}(U_M)=U_N$.
Moreover, $p_{MN}(V_M)$ is a Zariski-closed
linear subspace of $V_N$ containing the dense
subset $U_N$. Hence $p_{MN}(V_M)=V_N$.
All hypotheses of
Proposition~\ref{prop:relative-affine-open-limit}
are therefore satisfied, and its conclusion
gives the required extension.
\end{proof}

\subsubsection*{Proof of Theorem~\ref{thm:main-field}}

Let $G=\operatorname{Gal}(L/K)$, and let $\mathcal U$
be the set of open normal subgroups of $G$.
The extension $L/K$ is profinite $G$-Galois.
By Proposition~\ref{prop:coherent-Lenstra-galois},
it suffices to construct a trace-coherent family
$(x_N)_{N\in\mathcal U}$ with
$x_N\in\mathcal{CN}_{G/N}(L^N/K)$ for $N\in\mathcal U$.

Case 1: $K$ is finite.
Write $|K|=q$ and, for each $n \in \mathbb{N}$, set
$L_n=\{a\in L \mid a^{q^n}=a\}$ and 
$X_n=\mathcal{CN}_{\operatorname{Gal}(L_n/K)}(L_n/K)$.
Each $L_n/K$ is a finite Galois extension.
Thus Theorem~\ref{thm:CNBT} shows that $X_n$
is nonempty, and it is finite because $L_n$ is finite.
If $n\mid m$, then $L_n\subseteq L_m$, and
Lemma~\ref{lem:trace-preserves-complete-normality}
shows that the trace defines a map
$T_{m,n}:=\Tr_{L_m/L_n}:X_m \to X_n$.
These maps form an inverse system by transitivity
of trace.
Proposition~\ref{prop:inverse-limit-divisibility}
thus gives a family $(a_n)_{n\geq1}$ with
$a_n\in X_n$ and $T_{m,n}(a_m)=a_n$ when $n\mid m$.
Every finite intermediate field $E/K$ of degree $d$
equals $L_d$, since its $q^d$ elements exhaust
the roots of $T^{q^d}-T$.
For $N\in\mathcal U$, put
$d_N=[L^N:K]$ and $x_N=a_{d_N}$.
Then $L^N=L_{d_N}$, so $x_N$ is completely normal
for $L^N/K$.
If $M\subseteq N$, then $d_N\mid d_M$, and hence
$\Tr_{L^M/L^N}(x_M) =T_{d_M,d_N}(a_{d_M})
=a_{d_N}=x_N$.
Thus $(x_N)_{N\in\mathcal U}$ is the required family.

Case 2: $K$ is infinite.
Since $L/K$ is algebraic, counting roots of
polynomials in $K[T]$ gives $|L|=|K|$.
Every finite subextension of $L/K$ is generated
over $K$ by finitely many elements of $L$, so
there are at most $|K|$ such subextensions.
By Galois correspondence, it follows that
$|\mathcal U|\leq|K|$.
Apply Theorem~\ref{thm:profinite-galois} with
$R=K$, $S=L$, $\mathcal J=\{G\}$, and $x_G=1$.
Since
$\mathcal{CN}_{\{e\}}(K/K)=K^\times$,
this gives the required global trace-coherent
family of completely normal basis generators.
\qed

\subsection*{AI}
The author used AI tools to assist with language editing and literature searches.

\end{document}